\newcommand{\stab}[1]{G_{#1}}
\newcommand{\third}[1]{ \overline{-({#1}) } }
\newcommand{\br}[1]{\left\{#1\right\}}
\newcommand{\pr}[1]{\left(#1\right)}
\newcommand{\ZZ}{\mathbb{Z}}
\newtheorem{theorem}{Theorem}[section]
\newtheorem{lemma}[theorem]{Lemma}
\newtheorem{observation}[theorem]{Observation}
\newtheorem{proposition}[theorem]{Proposition}
\newtheorem{claim}{Claim}
\theoremstyle{definition}
\newtheorem{definition}[theorem]{Definition}
\begin{document}

\title{A New Proof of Kemperman's Theorem}

\author[Boothby]{Tomas Boothby}
\author[DeVos]{Matt DeVos}
\thanks{The second author is supported in part by an NSERC Discovery Grant (Canada).}
\author[Montejano]{Amanda Montejano}
\thanks{The third author was partially supported by the project PAPIT IA102013.}
\date{}

\begin{abstract}
Let $G$ be an additive abelian group and let $A,B \subseteq G$ be finite and nonempty.  The pair $(A,B)$ is called critical if the sumset $A+B = \{ a+b \mid \mbox{$a \in A$ and $b\in B$} \}$ satisfies $|A+B| < |A| + |B|$.  Vosper proved a theorem which  characterizes all critical pairs in the special case when $|G|$ is prime.  Kemperman generalized this by proving a structure theorem for critical pairs in an arbitrary abelian group.  Here we give a new proof of Kemperman's Theorem.  
\end{abstract}

\maketitle

\section{Introduction}

Throughout this paper we shall assume that $G$ is an additive abelian group. For subsets $A,B \subseteq G$, we define the \emph{sumset} of $A$ and $B$ to be  $A + B = \{ a + b \mid \mbox{$a \in A$ and $b \in B$} \}.$  If $g \in G$ we let $g + A = \{g\} + A$ and $A + g = A + \{g\}$.  The \emph{complement} of $A$ is the set $\overline{A} = G \setminus A$, and we let $-A = \{ -a \mid a \in A \}$.  

The classical direct problem for addition in groups is to ask: how small the sumset $A + B$ can be? If $G \cong \ZZ$ (or more generally, $G$ is torsion-free) it is not difficult to argue that  $| A + B | \geq | A | + | B | - 1$ holds for every pair of finite nonempty sets $(A,B)$.  In 1813 Cauchy proved that this assertion remains true when the order of $G$ is prime and $ A + B \neq G$. This  result was rediscovered by Davenport in 1935, and it is now known as the Cauchy-Davenport theorem.

\begin{theorem}[Cauchy \cite{cauchy} - Davenport \cite{davenport}]
\label{thm:CD}
If $A,B \subseteq \ZZ / p\ZZ$ are nonempty and $p$ is prime then 
\[ |A+B| \ge \min \{ p, |A| + |B| - 1 \}. \]
\end{theorem}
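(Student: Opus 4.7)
The plan is to argue by induction on $|B|$ using the Dyson $e$-transform.  The base case $|B|=1$ is immediate, since then $A+B$ is a translate of $A$ and has size $|A|=|A|+|B|-1$.

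For the inductive step, suppose $|B|\ge 2$ and, for contradiction, that $|A+B|<\min\{p,\,|A|+|B|-1\}$.  For each $e\in\ZZ/p\ZZ$ I set
\[
A'\;=\;A\cup(B+e),\qquad B'\;=\;B\cap(A-e).
\]
Inclusion--exclusion immediately gives $|A'|+|B'|=|A|+|B|$, and a short case check (splitting on whether an element $x\in A'$ lies in $A$ or in $B+e$) shows $A'+B'\subseteq A+B$.  If I can produce $e$ with $1\le|B'|<|B|$, then applying the inductive hypothesis to $(A',B')$ yields
\[
|A+B|\;\ge\;|A'+B'|\;\ge\;\min\bigl\{p,\,|A|+|B|-1\bigr\},
\]
contradicting the assumption and completing the induction.

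The only substantive step is locating such an $e$, and this is where the primality of $p$ enters.  Fix $b_0\in B$; for every $a\in A$ the choice $e=a-b_0$ already guarantees $b_0\in B'$, so $|B'|\ge 1$.  If no such $a$ makes $|B'|<|B|$, then $B\subseteq A-(a-b_0)$ for every $a\in A$, i.e.\ $A+(B-b_0)\subseteq A$.  Picking $b_1\in B\setminus\{b_0\}$, which exists because $|B|\ge 2$, gives $A+(b_1-b_0)\subseteq A$, and by cardinality this inclusion is an equality; so $A$ is invariant under translation by the nonzero element $b_1-b_0$.  Since $p$ is prime, $\langle b_1-b_0\rangle=\ZZ/p\ZZ$, forcing $A=\ZZ/p\ZZ$ and hence $|A+B|=p$, again contradicting $|A+B|<p$.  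I expect the main obstacle to be bookkeeping rather than a conceptual hurdle: verifying the two identities for $A'$ and $B'$ carefully, and being alert that $|A'|$ may grow even though the inductive parameter $|B|$ strictly decreases.
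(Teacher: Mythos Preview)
Your argument is correct: this is the classical Davenport transform (Dyson $e$-transform) proof, and all the steps check out --- the identity $|A'|+|B'|=|A|+|B|$, the inclusion $A'+B'\subseteq A+B$, and the primality argument forcing $A=\ZZ/p\ZZ$ when no shrinking $e$ exists are all handled properly.

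However, there is nothing to compare against: the paper does not prove Theorem~\ref{thm:CD}. It is stated as a classical result with citations to Cauchy and Davenport and then immediately subsumed by Kneser's Theorem~\ref{thm:Kneser}, which the paper also quotes without proof. Indeed, Cauchy--Davenport is an immediate corollary of Kneser in the paper's framework: if $H=\stab{A+B}$ in $\ZZ/p\ZZ$, then either $H=\ZZ/p\ZZ$ (so $A+B=\ZZ/p\ZZ$) or $H=\{0\}$ (so $|A+B|\ge|A|+|B|-1$). Your self-contained transform proof is therefore more elementary than anything the paper relies on, but it is not something the paper itself supplies.
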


For arbitrary abelian groups we can not expect to have such a lower bound. For instance, if $H$ is a finite proper nontrivial subgroup of $G$, and $A=B = H$, then we will have $A+B=H$. So any generalization of Theorem \ref{thm:CD} will have to take subgroup structure into account.  Next we introduce an important theorem of Kneser which yields a generalization of Cauchy-Davenport to arbitrary abelian groups.

We define the \emph{stabilizer} of a subset  $A\subseteq G$, denoted  $\stab{A}$, to be the subgroup of $G$ defined by $\stab{A}= \{ g \in G \mid g + A = A \}.$  Note that $A$ is a union of $\stab{A}$-cosets, and $\stab{A}$ is the maximal subgroup of $G$ with this property.  For a subgroup $H\leq G$, we say that a subset  $A$ is \emph{$H$-stable} if $A+H=A$ (equivalently, $H \le \stab{A}$).

\begin{theorem}[Kneser \cite{kneser}, version I]
\label{thm:Kneser}
If $A$ and $B$ are finite nonempty subsets of $G$ and $H = \stab{A+B}$, then
\begin{equation} \label{eq:Kneser}
|A+B| \ge |A + H| + |B+H| - |H|.
\end{equation} 
\end{theorem}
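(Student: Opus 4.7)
The plan combines a reduction to the trivial-stabilizer case with an induction using Dyson's $e$-transform.

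\emph{Step 1 (reduction to trivial stabilizer).} Let $H=\stab{A+B}$ and pass to $G/H$. Writing $\bar A, \bar B$ for the images of $A+H, B+H$, the sumset $\bar A+\bar B=(A+B)/H$ has trivial stabilizer in $G/H$ (any lift of a stabilizing element would lie in $H$). Multiplying cardinalities through by $|H|$, inequality \eqref{eq:Kneser} is equivalent to
\[
|\bar A+\bar B| \ge |\bar A|+|\bar B|-1.
\]
Hence it suffices to prove: \emph{if $\stab{A+B}=\{0\}$ then $|A+B|\ge |A|+|B|-1$.}

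\emph{Step 2 (Dyson transform and induction on $|B|$).} For $t\in G$, set $A_t=A\cup(B+t)$ and $B_t=B\cap(A-t)$. Inclusion-exclusion gives $|A_t|+|B_t|=|A|+|B|$, and $A_t+B_t\subseteq A+B$ is a direct check (if $x=b+t\in B+t$ and $y\in A-t$, then $y+t\in A$, so $x+y=b+(y+t)\in B+A$). I induct on $|B|$. The base case $|B|=1$ is immediate. For $|B|\ge 2$ I first find $t=a-b$ with $a\in A$, $b\in B$ such that $\emptyset\ne B_t\subsetneq B$; otherwise $B+a-b\subseteq A$ for all $a\in A,b\in B$, forcing $A+B=A+b$ for each $b\in B$, and so $B-B\subseteq\stab{A}\subseteq\stab{A+B}=\{0\}$, contradicting $|B|\ge 2$. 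With such a $t$, applying the induction hypothesis to $(A_t,B_t)$, which has strictly smaller $|B_t|$, combined with $A_t+B_t\subseteq A+B$, is intended to yield the required bound.

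\emph{Main obstacle.} The subtle point is that $H':=\stab{A_t+B_t}$ need not be trivial; when it is not, the inductive hypothesis provides only the weaker bound
\[
|A_t+B_t|\ge |A_t+H'|+|B_t+H'|-|H'|,
\]
which does not immediately yield $|A+B|\ge|A|+|B|-1$. Closing this gap — by exploiting that $A_t+B_t$ is $H'$-periodic while $A+B$ is not (so that the inclusion $A_t+B_t\subsetneq A+B$ is strict), or by running a secondary induction that re-quotients by $H'$ at each step — is the delicate heart of the argument, and is historically the step that demands the most careful case analysis.
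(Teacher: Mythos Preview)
The paper does not prove Theorem~\ref{thm:Kneser} at all: Kneser's theorem is quoted from \cite{kneser} as a known result and used as a black box throughout (the only related argument in the paper is the ``Proof of Equivalence'' in Section~\ref{sec:trio}, which merely shows that the trio formulation, version~II, is equivalent to version~I, not that either holds). So there is no ``paper's own proof'' to compare against; your proposal is an attempt to supply what the paper deliberately imports.

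As for the proposal itself, it is not a proof but a proof sketch with an explicitly flagged hole, and that hole is genuine. Your Step~1 reduction is fine, and the $e$-transform identities in Step~2 are correct. But the paragraph labelled ``Main obstacle'' is exactly where the content of Kneser's theorem lives, and you have not carried it out. Concretely: after the transform you have $A_t+B_t\subseteq A+B$, $|A_t|+|B_t|=|A|+|B|$, and $|B_t|<|B|$, but the stabilizer $H'$ of $A_t+B_t$ may be nontrivial, so the inductive hypothesis (in the trivial-stabilizer form you reduced to) does not apply to $(A_t,B_t)$. Observing that $A_t+B_t\subsetneq A+B$ when $H'\neq\{0\}$ is true but by itself only buys you one extra element, whereas the deficit $|H'|-1$ in the bound $|A_t+H'|+|B_t+H'|-|H'|$ can be arbitrarily large; and ``re-quotienting by $H'$'' changes both sets and the ambient group, so the induction parameter must be chosen with care (e.g.\ on $|A+B|$ or on $|A|+|B|$ over all abelian groups simultaneously, rather than on $|B|$ inside a fixed group). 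The standard completions (Kneser's original, or the treatments in Nathanson or Tao--Vu) resolve this by inducting on a different quantity and by carrying the full Kneser inequality, not just the trivial-stabilizer case, through the induction; one then argues separately in the cases $A_t+B_t=A+B$ and $A_t+B_t\subsetneq A+B$, using in the latter case that the $H'$-periodic set $A_t+B_t$ omits a full $H'$-coset of $A+B$. Until you actually execute one of these completions, the proposal remains a plan rather than a proof.
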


To better understand Kneser's theorem, let us introduce some further notation.   Whenever $H \le G$ we let $\varphi_{G/H}$ denote the canonical homomorphism from $G$ to the quotient group $G/H$.  Now for $H = \stab{A+B}$ let  $\tilde{A}=\varphi_{G/H}(A)$ and  $\tilde{B}=\varphi_{G/H}(B)$.  By definition we have $|A+B|=|\tilde{A}+\tilde{B}||H|$, $|A+H|=|\tilde{A}||H|$ and  $|B+H|=|\tilde{B}||H|$. Using these simple equalities, we can express (\ref{eq:Kneser}) as $|\tilde{A}+\tilde{B}|\geq |\tilde{A}|+|\tilde{B}|-1$, acquiring the appearance of Cauchy-Davenport's lower bound in $G / H$. 

Define the \emph{deficiency} of a pair $(A,B)$ to be $\delta(A,B) = |A| + |B| - |A+B|$. We will say that a pair $(A,B)$ is \emph{critical} if $\delta (A,B)> 0$. The Cauchy-Davenport Theorem implies that, apart from the case when $A+B= \ZZ / p\ZZ$, all critical pairs in $\ZZ / p\ZZ$ satisfy $\delta(A,B)=1$. Meanwhile, Kneser's theorem asserts that for a critical pair $(A,B)$ in $G$, the pair $(\tilde{A},\tilde{B})$ of $G/H$ as defined above, will be critical with deficiency $\delta(\tilde{A},\tilde{B})=1$.  Indeed, Kneser's Theorem is equivalent to the statement that every critical pair $(A,B)$ with $H = G_{A+B}$ satisfies $|A+B| = |A+H| + |B+H| - |H|$.

Now we shall turn our attention to the structure of critical pairs.  One simple construction for a critical pair $(A,B)$ is to choose $A,B$ so that $\min\{ |A|, |B| \} = 1$.  A second, more interesting construction is to choose $A$ and $B$ to be arithmetic progressions with a common difference.  In 1956 Vosper proved the following theorem which characterizes critical pairs in groups of prime order, and these structures feature prominently in his result.

\begin{theorem}[Vosper \cite{vosper1} \cite{vosper2}, version I]
\label{thm:vosper1}
If $(A,B)$ is a critical pair of nonempty subsets of $\ZZ/p\ZZ$ and $p$ is prime, then one of the following holds.
\begin{enumerate}
\item $|A| + |B| > p$ and $A+B = \ZZ/p\ZZ$.
\item $|A| + |B| = p$ and $|A+B| = p-1$.
\item $\min\{|A|, |B| \} = 1$.
\item $A$ and $B$ are arithmetic progressions with a common difference.
\end{enumerate}
\end{theorem}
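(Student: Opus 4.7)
The plan is to reduce to a single hard case using Cauchy-Davenport and then induct via a Dyson $e$-transform.

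\emph{Reduction.} Criticality gives $|A+B|\le|A|+|B|-1$, while Theorem~\ref{thm:CD} gives $|A+B|\ge\min\{p,|A|+|B|-1\}$. If $|A|+|B|>p$, these force $A+B=\ZZ/p\ZZ$, case (1). Otherwise $|A+B|=|A|+|B|-1\le p-1$; the boundary $|A+B|=p-1$ with $|A|+|B|=p$ is (2), and $\min\{|A|,|B|\}=1$ is (3). It remains to show: when $|A|,|B|\ge 2$ and $|A+B|=|A|+|B|-1\le p-2$ (the \emph{hard case}), $A$ and $B$ are arithmetic progressions sharing a common difference.

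\emph{Induction on $|A|+|B|$.} The base case $|A|=|B|=2$ is direct: among the four sums $a_i+b_j$, the only coincidence compatible with distinctness is $a_1+b_2=a_2+b_1$, yielding the common difference $a_2-a_1=b_2-b_1$. For the inductive step, use the Dyson $e$-transform: for $e$ with $A\cap(e+B)\ne\emptyset$, set $A_e=A\cup(e+B)$ and $B_e=B\cap(-e+A)$; one checks $|A_e|+|B_e|=|A|+|B|$ and $A_e+B_e\subseteq A+B$. In the hard case there is an $e$ with $1\le|B_e|<|B|$, since otherwise $e+B\subseteq A$ for every $e\in A-B$, which (fixing $b\in B$ and varying $a\in A$) gives $A+(B-b)\subseteq A$; then $B-b$ lies in the stabilizer $\stab{A}$, but $\stab{A}=\{0\}$ (as $p$ is prime and $A\ne\ZZ/p\ZZ$), forcing $|B|=1$. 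The pair $(A_e,B_e)$ is then again critical with deficiency $1$ and lies in the hard case (or in case (3) when $|B_e|=1$, handled separately). By induction $A_e$ and $B_e$ are APs with a common difference $d$; one then lifts to $(A,B)$ by analyzing how $A$ sits in $A_e$ (as the complement of $(e+B)\setminus A$) and how $B$ extends $B_e$ within $-e+A_e$.

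\emph{Main obstacle.} The technical core is the lift. Because $A\subseteq A_e$ while $B\supseteq B_e$, the AP structure must be transferred asymmetrically: $A$ is carved out of the AP $A_e$ by removing a tail, and $B$ is recovered from $B_e$ by adjoining the elements just outside $-e+A$. Tracking the common difference $d$ through these union/intersection operations, and handling the degenerate sub-case $|B_e|=1$ (where the induction returns case (3) rather than case (4)), is where most of the work lies.
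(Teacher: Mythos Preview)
Your reduction is clean, but the inductive engine is broken as written. You declare ``Induction on $|A|+|B|$'' and then immediately observe that the $e$-transform preserves $|A_e|+|B_e|=|A|+|B|$; so nothing decreases and the induction hypothesis is never available. The standard fix is to induct on $\min\{|A|,|B|\}$ (equivalently on $|B|$ after arranging $|B|\le|A|$): you chose $e$ so that $|B_e|<|B|$, and that is the quantity that drops. This is not cosmetic---without the correct well-founded measure there is no argument.

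Even after repairing the induction, the step you flag as the ``main obstacle'' is essentially the whole proof, and you have not supplied it. Knowing that $A_e$ is an arithmetic progression with difference $d$ does not by itself force the subset $A\subseteq A_e$ to be one (a generic subset of an AP is not an AP), nor does it automatically extend $B_e$ to $B$. In the usual execution one exploits the extra information $A_e+B_e=A+B$ (which you have, since both sides have size $|A|+|B|-1$) together with the exact count $|A_e\setminus A|=|B\setminus B_e|$ to pin down which elements were added and removed; the case $|B_e|=1$ genuinely requires a separate argument (there is no AP structure to transfer from a singleton). None of this is in your proposal beyond a promissory sentence.

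For comparison, the paper does not give a standalone proof of Vosper at all: it is stated as background, and the content of the paper is a proof of Kemperman's theorem for arbitrary abelian groups, from which Vosper falls out as the $G=\ZZ/p\ZZ$ special case. That route is structurally unrelated to yours: it passes to \emph{trios} $(A,B,C)$ with $C=\overline{-(A+B)}$, uses Kneser's theorem and a ``purification'' lemma in place of the $e$-transform, and runs a minimal-counterexample argument through stability lemmas for beats and chords. Your $e$-transform approach is the classical direct one and is perfectly viable for $\ZZ/p\ZZ$, but to make it a proof you must (i) fix the induction variable and (ii) actually carry out the lift, including the $|B_e|=1$ branch.
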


In 1960 Kemperman proved a structure theorem which characterizes critical pairs in an arbitrary abelian group. Although this theorem was published few years after Vosper's, it took some time before it achieved the recognition and attention it deserved.  This resulted in part from the inherent complexity of critical pairs, and in part from the difficult nature of Kemperman's paper.  Recently, this situation has improved considerably thanks to the work of Grynkiewicz \cite{grynkiewicz-qpk} \cite{grynkiewicz-step}, Lev \cite{lev-kemp}, and Hamidoune \cite{hamidoune-kemperman} \cite{hamidoune-structure}.  Grynkiewicz recasts Kemperman's Theorem and then takes a step further by characterizing those pairs $(A,B)$ with $|A+B| = |A+ |B|$.  Lev gives a more convenient ``top-down'' formulation of Kemperman's Theorem which we shall adopt here.  Finally, Hamidoune showed that all of these results could be achieved using the isoperimetric method.  

Here we shall give a new proof of Kemperman's theorem based on some recent work of the second author which generalizes Kemperman's Theorem to arbitrary groups.  Although this generalization leans heavily on the isoperimetric method, we shall not adopt these techniques here.  Instead we will exploit Kneser's theorem, thus making our proof rather closer in spirit to Kemperman's original than to any of these more recent works.  Our paper also differs with the existing literature in our statement of Kemperman's Theorem.  The main difference here is that we will work with triples of subsets instead of pairs, and this has the effect of reducing the number of configurations we need to consider.
 
The remainder of this paper is organized as follows.  Over the next two sections, we reduce the original classification problem to a classification problem for certain types of triples of subsets.  Section~\ref{sec:critrios} contains our new statement of Kemperman's theorem, and the remaining sections are devoted to its proof.

\section{Pure Pairs}

We define a pair $(A,B)$ to be \emph{pure} if $G_A = G_B = G_{A+B}$.  Our main goal in this section is to reduce our original problem to that of classifying pure critical pairs. However, we shall first address some of the uninteresting constructions of critical pairs.  

Consider the behaviour appearing in the first outcome of Theorem~\ref{thm:vosper1}, in the context of a general abelian group.  If $A,B \subseteq G$ satisfy $|A| + |B| > |G|$, then every $g \in G$ satisfies $B \cap (g-A) \neq \emptyset$ , and it follows that $A+B  = G$.  So every such pair will be critical.  Therefore, the critical pairs $(A,B)$ with $A+B = G$ are precisely those for which $|A| + |B| > |G|$.  Accordingly, we will call such pairs \emph{trivial}.  Another rather uninteresting construction of a critical pair $(A,B)$ is to take exactly one of $A$ or $B$ to be empty.  So, we will also call a pair $(A,B)$ \emph{trivial} if either $A = \emptyset$ or $B = \emptyset$, and we will generally restrict our attention to nontrivial critical pairs.

Now we turn our attention to the notion of pure.

\begin{observation}
\label{pureobs}
If $G_{A+B} = H$, then $(A+H, B+H)$ is pure.
\end{observation}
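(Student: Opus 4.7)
The plan is to verify each of the three stabilizer equalities required by the definition of purity, namely $G_{A+H} = G_{B+H} = G_{(A+H)+(B+H)} = H$. The engine throughout is the elementary identity $H + H = H$ together with the hypothesis $A + B + H = A + B$, which simply restates $H = G_{A+B}$.

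First I would compute the sumset $(A+H) + (B+H)$. Expanding and using $H + H = H$ followed by $A + B + H = A + B$, one finds $(A+H) + (B+H) = A + B$, and hence $G_{(A+H)+(B+H)} = G_{A+B} = H$ for free.

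Next I would establish $G_{A+H} = H$. The inclusion $H \le G_{A+H}$ is immediate since $A+H$ is by construction $H$-stable. For the reverse inclusion, I would observe that any $g \in G_{A+H}$ also stabilizes $(A+H) + B$; but this sumset equals $A+B+H = A+B$, so $g \in G_{A+B} = H$. The argument for $G_{B+H} = H$ is identical by symmetry.

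Assembling these gives $G_{A+H} = G_{B+H} = G_{(A+H)+(B+H)} = H$, which is exactly the definition of purity for the pair $(A+H, B+H)$. There is no real obstacle here; the only thing to be careful about is recognizing that the ``$H$-extension'' of the sumset collapses because $H$ already stabilizes $A+B$, so that passing from $A$ to $A+H$ (and likewise from $B$ to $B+H$) does not enlarge the sumset and therefore cannot enlarge the stabilizer beyond $H$.
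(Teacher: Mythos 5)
Your proof is correct and is essentially the same as the paper's: the chain $H \le G_{A+H} \le G_{(A+H)+B} = G_{A+B} = H$ (and its analogue for $B+H$), together with the collapse $(A+H)+(B+H)=A+B$, is exactly the argument given in the text.
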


\begin{proof}
 This follows from $H \le G_{A+H} \le G_{A+H + B} = G_{A+B} = H$ and a similar chain of inequalities for $G_{B+H}$.
\end{proof} 

Note that by our discussion from the previous section, every pure critical pair $(A,B)$ satisfies $|A+B| = |A| + |B| - |G_{A+B}|$.  Next we will show that the problem of classifying critical pairs reduces to that of classifying pure critical pairs.  In short, critical pairs $(A,B)$ are at most $G_{A+B}$ elements away from a pure critical pair $(A^*,B^*)$ where $A \subseteq A^*$ and $B \subseteq B^*$.  This \emph{superpair} / \emph{subpair} relation is denoted $(A,B) \subseteq (A^*,B^*)$.

\begin{proposition}
For every nontrivial pair of finite subsets  $(A,B)$ of $G$ the following are equivalent.
\begin{enumerate}
\item The pair $(A,B)$ is critical.
\item There exists a pure critical superpair $(A^*,B^*) \supseteq (A,B)$ for which
$|A^* \setminus A| + |B^* \setminus B| < |G_{A^*+B^*}|$.  
\end{enumerate}
\end{proposition}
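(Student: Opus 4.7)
The plan is to treat the two implications as short independent calculations, with the Observation above and Kneser's theorem doing all of the real work.

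For the forward direction, I would take the natural candidate $A^* = A + H$ and $B^* = B + H$, where $H = G_{A+B}$. Purity of $(A^*, B^*)$ is already delivered by the Observation. Because $A + B$ is $H$-stable, one checks immediately that $A^* + B^* = A + B$, so in particular $G_{A^* + B^*} = H$. Kneser's theorem applied to $(A,B)$ then gives $|A+B| \ge |A^*| + |B^*| - |H|$, which rearranges to
\[ |A^* \setminus A| + |B^* \setminus B| = (|A^*| - |A|) + (|B^*| - |B|) \le |H| - \delta(A,B) < |H|, \]
the last inequality using that $(A,B)$ is critical. Criticality of $(A^*, B^*)$ then follows from $\delta(A^*, B^*) = |A^*| + |B^*| - |A+B| \ge \delta(A,B) > 0$.

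For the converse, I would use the identity noted just after the Observation: every pure critical pair saturates Kneser's inequality, so writing $H^* = G_{A^* + B^*}$ we have $|A^* + B^*| = |A^*| + |B^*| - |H^*|$. Combining this with the trivial containment $A + B \subseteq A^* + B^*$ yields
\[ \delta(A,B) \ge |A| + |B| - |A^* + B^*| = |H^*| - \bigl(|A^* \setminus A| + |B^* \setminus B|\bigr) > 0 \]
by the size hypothesis, so $(A,B)$ is critical.

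I do not expect any genuine obstacle here; the proposition really is a bookkeeping consequence of Kneser's theorem. The two points worth being careful about are the identity $A^* + B^* = A + B$ in the forward direction, which is what lets a single subgroup $H$ control both pairs simultaneously, and the use of purity in the backward direction to upgrade Kneser's inequality to an equality. Nontriviality plays no essential role in the calculation itself; it is in the background only to match the surrounding discussion.
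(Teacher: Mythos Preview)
Your argument is correct. The forward direction is identical to the paper's. For the backward direction you take a slightly lighter route: the paper actually proves the stronger fact $A+B = A^*+B^*$ by observing that every element of $A^*+B^*$ has at least $|H^*|$ representations as a sum (with $H^* = G_{A^*+B^*}$), so removing fewer than $|H^*|$ elements in total from $A^*$ and $B^*$ cannot destroy them all. You bypass this by using only the trivial containment $A+B \subseteq A^*+B^*$, which already suffices for the deficiency bound. Your version is shorter; the paper's version yields the extra information that the sumset is unchanged, which is nice to know but not needed for the proposition as stated.
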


\begin{proof}
If (1) holds, then set $H = G_{A+B}$ and note that Observation \ref{pureobs} implies that $A^* = A+H$ and $B^* = B+H$ have $(A^*,B^*)$ pure and critical.  Thus $|A| + |B| > |A+B| = |A^* + B^*| = |A^*| + |B^*| - |H|$ so (2) holds.  

If (2) holds, then set $H = G_{A^* + B^*}$, let $z \in A^* + B^*$ and choose $a \in A^*$ and $b \in B^*$ with $a+b = z$.  Now, for every $h \in H$ the elements $a' = a+h$ and $b' = b-h$ satisfy $a' \in A^*$ and $b' \in B^*$ and $a' + b' = z$.  So, $z$ has at least $|H|$ distinct representations as a sum of an element in $A^*$ and an element in $B^*$.  It follows from this and $|A^* \setminus A| + |B^* \setminus B| < |H|$ that $A+B = A^* + B^*$.  This gives us $|A+B| = |A^* + B^*| = |A^*| + |B^*| - |H| > |A| + |B|,$ so $(A,B)$ is critical and (1) holds.
\end{proof}

In light of the above proposition, to classify all critical pairs, it suffices to classify the nontrivial pure critical pairs.

\section{Trios}
\label{sec:trio}

In the study of critical pairs, there is a third set which appears naturally in conjunction with $A$ and $B$, namely $C = \overline{-(A+B)}$.  For simplicity, let us assume for a moment that $G$ is finite and $(A,B)$ is critical.  Then we have
\begin{itemize}
\item $0 \not\in A+ B + C$
\item $|A| + |B| + |C| > |G|$.
\end{itemize}

In this case we see that the pair $(B,C)$ is critical since $B+C$ is disjoint from $-A$ (so $|B+C| \le |G| - |A| < |B| + |C|$) and similarly $(A,C)$ is critical.  So, in other words, taking the set $C$ as defined above gives us a triple of sets so that each of the three pairs is critical.  Accordingly we now extend our definitions from pairs to triples.  To allow for infinite groups we shall permit  sets which are infinite but cofinite.

\begin{definition}
 If $A,B,C \subseteq G$ satisfy $0 \not\in A + B + C$ and each of $A$, $B$, $C$ is 
either finite or cofinite, then we say that $(A,B,C)$ is a \emph{trio}.  
\end{definition}

\begin{definition}
 If $(A,B,C)$ is a trio and $n$ is the size of the complement of the largest of the sets $A,B,C$ and $\ell, m$ are the sizes of the other two sets, then we define the \emph{deficiency} of $(A,B,C)$ to be $\delta(A,B,C) = \ell + m - n$.  We say that $(A,B,C)$ is \emph{critical} if $\delta(A,B,C) > 0$.  In the case that $G$ is finite, we have $\delta(A,B,C) = |A|+|B|+|C| - |G|$.
\end{definition}

We say that a trio $(A,B,C)$ is \emph{trivial} if one of $A$, $B$, or $C$ is empty.   These definitions for trios naturally extend our notions for pairs.  More precisely, if $A,B \subseteq G$ are finite and  $C = \overline{-(A+B)}$ then $(A,B,C)$ is a trio, and we have
\begin{itemize}
\item $(A,B)$ is trivial if and only if $(A,B,C)$ is trivial.
\item $\delta(A,B) = \delta(A,B,C)$
\item $(A,B)$ is critical if and only if $(A,B,C)$ is critical.
\end{itemize}

Vosper's Theorem has a convenient restatement in terms of trios, as the extra symmetry in a trio eliminates one of the outcomes (and assuming nontriviality eliminates another).  

\begin{theorem}[Vosper, version II]
\label{vosper2}
If $(A,B,C)$ is a nontrivial critical trio in $\ZZ/p\ZZ$ and $p$ is prime, then one of the following holds.
\begin{enumerate}
\item $\min\{ |A|, |B|, |C| \} = 1$.
\item $A$, $B$, and $C$ are arithmetic progressions with a common difference.
\end{enumerate}
\end{theorem}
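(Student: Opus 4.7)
The plan is to reduce Vosper's theorem for trios to Vosper's original theorem for pairs (Theorem~\ref{thm:vosper1}), applied to $(A,B)$. The key observation is that the trio condition $0 \not\in A+B+C$ is equivalent to the containment $C \subseteq \overline{-(A+B)}$, which gives $|A+B| \leq p - |C|$. Combined with the critical hypothesis $|A|+|B|+|C| > p$ (which is the finite-group form of $\delta(A,B,C) > 0$), this yields $|A+B| < |A|+|B|$, so $(A,B)$ is itself a critical pair, and Theorem~\ref{thm:vosper1} applies.

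I would then run through its four outcomes. Outcome (1) forces $A+B = \ZZ/p\ZZ$ and hence $C \subseteq \overline{-(A+B)} = \emptyset$, contradicting nontriviality. Outcome (2) gives $|\overline{-(A+B)}| = 1$, so $|C| = 1$ and we are in conclusion (1) of the present theorem. Outcome (3), $\min\{|A|,|B|\} = 1$, directly gives conclusion (1). The substantive case is outcome (4), where $A$ and $B$ are arithmetic progressions with a common difference $d$.

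In outcome (4), the set $A+B$ is an AP with common difference $d$ of length $|A|+|B|-1$. Since $d$ is invertible modulo $p$, complementation in $\ZZ/p\ZZ$ preserves the AP structure: the sequence $s, s+d, s+2d, \ldots, s+(p-1)d$ exhausts $\ZZ/p\ZZ$, so removing an initial segment $\{s, s+d, \ldots, s+(k-1)d\}$ leaves exactly the tail segment $\{s+kd, \ldots, s+(p-1)d\}$, which is again an AP with difference $d$. Therefore $\overline{-(A+B)}$ is an AP with common difference $d$ and size $p - |A| - |B| + 1$. The critical hypothesis rewrites as $|C| \geq |\overline{-(A+B)}|$, and together with $C \subseteq \overline{-(A+B)}$ this forces $C = \overline{-(A+B)}$, so $C$ is also an AP with common difference $d$, establishing conclusion (2).

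The argument is essentially a bookkeeping exercise once the reduction to $(A,B)$ is made. The only step that requires attention is the equality $C = \overline{-(A+B)}$ in outcome (4), which is where the critical hypothesis gets used in a nontrivial way to promote a containment to an equality; everything else is immediate from Theorem~\ref{thm:vosper1} and the elementary fact about complements of APs in prime cyclic groups. I do not anticipate any genuine obstacles.
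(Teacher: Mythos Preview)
The paper does not give a formal proof of this statement; it simply remarks before stating it that ``the extra symmetry in a trio eliminates one of the outcomes (and assuming nontriviality eliminates another).'' Your proposal correctly and completely spells out precisely this reduction to Theorem~\ref{thm:vosper1}: outcome~(1) is ruled out by nontriviality (since $A+B=\ZZ/p\ZZ$ forces $C=\emptyset$), outcome~(2) is absorbed into the $\min=1$ conclusion via the trio symmetry (since $|A+B|=p-1$ forces $|C|=1$), and in outcome~(4) the containment $C\subseteq\overline{-(A+B)}$ is promoted to equality using the deficiency bound, yielding that $C$ is an AP with the same difference.
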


Similar to pairs, we define the \emph{supertrio} relation $(A,B,C) \subseteq (A^*,B^*,C^*)$ if $A \subseteq A^*, B \subseteq B^*$, and $C \subseteq C^*$, and call a trio $(A,B,C)$ \emph{maximal} if the only supertrio $(A^*, B^*, C^*) \supseteq (A,B,C)$ is $(A,B,C)$ itself.  Note that $(A,B,C)$ is maximal if and only if $C = \overline{-(A+B)}$ and $B = \overline{-(A+C)}$ and $A = \overline{-(B+C)}$.  The following proposition shows that pure critical pairs come from maximal critical trios.

\begin{proposition}
Let $A,B,C \subseteq G$ be nonempty and assume $A,B$ are finite.  Then the following are equivalent.
\begin{enumerate}
\item $(A,B)$ is a pure critical pair and $C = \overline{-(A+B)}$.
\item $(A,B,C)$ is a maximal critical trio.
\end{enumerate}
\end{proposition}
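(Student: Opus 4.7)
The plan is to handle the two implications separately. In both directions the criticality condition transfers automatically: from the remark in Section~\ref{sec:trio} we have $\delta(A,B) = \delta(A,B,C)$ whenever $A, B$ are finite and $C = \overline{-(A+B)}$, which is given in (1) and forced by maximality in (2). The real content is translating purity of $(A,B)$ into the full maximality of the trio, and conversely.

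For $(1) \Rightarrow (2)$, set $H = G_{A+B}$, which by purity equals $G_A = G_B$. It suffices to prove $B = \overline{-(A+C)}$, since the dual identity $A = \overline{-(B+C)}$ follows by a symmetric argument (with $A$ and $B$ swapped). The inclusion $B \subseteq \overline{-(A+C)}$ is immediate from $0 \notin A+B+C$. For the opposite inclusion, suppose for contradiction that $y \in \overline{-(A+C)} \setminus B$. Unwinding the definition of $C = \overline{-(A+B)}$ shows that $y + A \subseteq A+B$, so $A + (B \cup \{y\}) = A+B$. Since $y \notin B$ and $B$ is $H$-stable, the coset $y + H$ is disjoint from $B$, giving $|(B \cup \{y\}) + H| = |B| + |H|$. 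Applying Theorem~\ref{thm:Kneser} to $(A, B \cup \{y\})$, whose sumset still has stabilizer $H$, yields $|A+B| \geq |A| + (|B| + |H|) - |H| = |A| + |B|$, contradicting criticality of $(A,B)$.

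For $(2) \Rightarrow (1)$, the heart of the argument is the identity
\[
S + \overline{-S} = G \setminus G_S
\]
valid for every nonempty $S \subseteq G$ that is either finite or cofinite. One checks by direct unwinding that $y \in S + \overline{-S}$ iff $S - y \not\subseteq S$, and a cardinality comparison (applied either to $S$ or to its complement) identifies this with $y \notin G_S$. Given this, maximality of the trio provides three expressions for $A+B+C$, namely
\[
A+B+C = A + \overline{-A} = B + \overline{-B} = C + \overline{-C},
\]
and the identity then forces $G \setminus G_A = G \setminus G_B = G \setminus G_C$, i.e.\ $G_A = G_B = G_C$. Since $A+B = \overline{-C}$ shares its stabilizer with $C$, we conclude $G_A = G_B = G_{A+B}$, which is purity.

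The point I expect to demand the most care is the infinite case of the key identity: when $G$ is infinite, $C$ is cofinite rather than finite, and the cardinality argument showing that $S - y \subseteq S$ implies $S - y = S$ must be carried out on the finite complement $\overline{S}$ rather than on $S$ itself. Beyond this small subtlety, the proof is essentially bookkeeping on complements together with a single invocation of Kneser's theorem.
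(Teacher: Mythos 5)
Your proposal is correct. The direction $(1)\Rightarrow(2)$ is essentially the paper's argument: the paper supposes a proper supertrio $(A^*,B,C)$ exists, observes $A^*+B=A+B$, and derives a contradiction from Kneser's Theorem; you do the same thing one element at a time on the $B$-coordinate, using $H$-stability of $B$ to see that adjoining $y$ increases $|(B\cup\{y\})+H|$ by a full $|H|$. The direction $(2)\Rightarrow(1)$ is where you genuinely diverge. The paper argues that for any finite $H\le G_B$ the triple $(A+H,B,C)$ is again a trio, so maximality forces $A+H=A$, whence $G_B\le G_A$ and, by symmetry, $G_A=G_B=G_C$. You instead prove the identity $S+\overline{-S}=G\setminus G_S$ for nonempty finite-or-cofinite $S$ and read off $G\setminus G_A=G\setminus G_B=G\setminus G_C$ from the three maximality identities $A+B+C=A+\overline{-A}=B+\overline{-B}=C+\overline{-C}$. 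Both arguments are short and correct; the paper's perturbation argument is the one that recurs throughout the rest of the paper (it is the same move used in the proof of Kneser version II), while your identity is a clean, self-contained computation whose only delicate point --- running the cardinality argument on the finite complement when $S$ is cofinite --- you have correctly flagged and handled.
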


\begin{proof}
Assume that $(A,B)$ is pure and critical with $H = G_A = G_B = G_{A+B}$, and let $C = \overline{-(A+B)}$.  Suppose (for a contradiction) that $(A^*,B,C)$ is a supertrio with $A \subset A^*$.  Then $A^* + B \subseteq \overline{-C} = A+B$ so $A^* + B = A+B$, but then $|A^* + B| = |A + B| = |A| + |B| - |H| < |A^*| + |B| - |H|$ contradicts Kneser's Thoerem.  By a similar argument there is no trio $(A,B^*,C)$ with $B^* \supset B$, and thus $(A,B,C)$ is maximal.   

Next assume $(A,B,C)$ is maximal and critical, and note that whenever $H \le G_B$ we must also have $H \le G_A$ (otherwise $(A+H,B,C)$ contradicts maximality).  It follows that $G_A = G_B = G_C$.  Now $G_{A+B} = G_C$ implies that $(A,B)$ is pure, and $\delta(A,B) = \delta(A,B,C) > 0$ implies that $(A,B)$ is critical.  That $C = \third{A+B}$ follows from maximality.
\end{proof}

The above proposition further reduces the general classification problem to that of determining all maximal critical trios.  Next we give a version of Kneser's Theorem for trios which illustrates a key property of maximal critical trios, and prove the equivalence of the two versions.

\begin{theorem}[Kneser, version II]
\label{kneser2}
 If $(A,B,C)$ is a maximal critical trio in $G$, then $G_A = G_B = G_C$ and $\delta(A,B,C) = |G_A|$.
\end{theorem}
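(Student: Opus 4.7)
The plan is to leverage the previous proposition to reduce to a pure critical pair, then extract $\delta(A,B,C) = |G_A|$ from Kneser's theorem (version I) via a divisibility argument. Since $(A,B,C)$ is a critical trio, the deficiency $\ell + m - n$ is a finite positive integer, which forces the two smaller sets to be finite; by the symmetry of the conclusion under permutations of $A,B,C$, I may assume $A$ and $B$ are finite. The preceding proposition then gives that $(A,B)$ is a pure critical pair with $C = \overline{-(A+B)}$ and, in the course of its proof, that $G_A = G_B = G_C$. Setting $H := G_A = G_B = G_{A+B} = G_C$, the first conclusion is in hand.

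For the equality $\delta(A,B,C) = |H|$, I apply Kneser (version I) to the pair $(A,B)$: since $A$ and $B$ are both $H$-stable, the bound simplifies to
\[
|A+B| \;\ge\; |A+H| + |B+H| - |H| \;=\; |A|+|B|-|H|.
\]
Criticality of $(A,B)$ gives the reverse inequality $|A+B| \le |A|+|B|-1$. But each of $A$, $B$, and $A+B$ is a union of $H$-cosets, so $|A|+|B|-|A+B|$ is a positive multiple of $|H|$ lying in $[1,|H|]$; it must therefore equal $|H|$ exactly, and so $|A+B| = |A|+|B|-|H|$.

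It remains to translate this into a deficiency calculation. If $G$ is finite, then $|C| = |G| - |A+B|$ and
\[
\delta(A,B,C) = |A|+|B|+|C|-|G| = |A|+|B|-|A+B| = |H|.
\]
If $G$ is infinite, then $A+B$ is finite while $C = \overline{-(A+B)}$ is cofinite, so $C$ is the largest of the three sets; its complement has size $|A+B|$ and the other two sizes are $|A|$ and $|B|$, giving $\delta(A,B,C) = |A|+|B|-|A+B| = |H| = |G_A|$. The only genuinely nontrivial step is the sharpening of Kneser's inequality to an equality via the divisibility argument above; the rest is bookkeeping.
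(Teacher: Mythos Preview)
Your proof is correct and follows essentially the same route as the paper. The paper's proof is terser: after invoking the preceding proposition to conclude that $(A,B)$ is pure and critical with $G_A = G_B = G_{A+B} = G_C$, it simply writes ``by Kneser's Theorem $\delta(A,B,C) = \delta(A,B) = |G_{A+B}|$,'' relying on the discussion in the introduction (that Kneser's inequality is in fact an equality for critical pairs, i.e.\ $|\tilde{A}+\tilde{B}| = |\tilde{A}|+|\tilde{B}|-1$ in the quotient). Your divisibility argument is exactly this observation unpacked at the level of $G$ rather than $G/H$, so there is no substantive difference in approach.
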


\begin{proof}[Proof of Equivalence]
 To see that version I implies version II, let $(A,B,C)$ be a maximal critical trio and note that the previous proposition implies that $(A,B)$ is pure and critical.  Thus $G_A = G_B = G_{A+B} = G_C$, and by Kneser's Theorem $\delta(A,B,C) = \delta(A,B) = |G_{A+B}|$.

 For the other direction, let $A,B \subseteq G$ be finite and nonempty and assume $(A,B)$ is critical (otherwise the result is trivial).  Set $C = \overline{-(A+B)}$ and choose a maximal supertrio $(A^*,B^*,C) \supseteq (A,B,C)$.  Now applying the theorem gives us a subgroup $H = G_C = G_{A^*} = G_{B^*}$ with $\delta(A^*, B^*, C) = |H|$.  Since $\delta(A,B,C) > 0$ we have $|A^* \setminus A| < |H|$ which implies $A^* = A+H$ and by a similar argument $B^* = B+H$.  Thus $|A+H| + |B+H| - |A+B| = \delta(A^*,B^*) = |H|$ as desired.
\end{proof}

\section{Critical Trios}\label{sec:critrios}

\newcommand{\impureH}{
 \fill[gray] (.1,.1) circle (4pt);
 \fill[gray] (1.1,.1) circle (2pt);
 \fill[gray] (2.1,.1) circle (5pt);
}
\newcommand{\pureH}{
 \foreach \x in {0,1,2} {
   \fill[gray] (\x,0) -- (\x + .2,0) -- (\x + .2,.2) -- (\x, .2);
 }
}
\newcommand{\triomaker}[2]{
 \begin{tikzpicture}[very thick,x=2cm,y=2cm]
  \foreach \x/\Y in {#1} {
    \foreach \y/\l in \Y {
      \fill[gray] (\x,\y * .2) -- (\x + .2, \y * .2) -- (\x + .2, \y * .2 + .2) -- (\x, \y * .2 + .2);
      \node [label=right:{$\l$}] (l) at (\x + .1, \y * .2 + .1){};
    }
  }
  #2
  \foreach \x in {0,1,2} {
    \foreach \y in {.2,.4,...,1.2} {
      \draw (\x,\y) -- (\x + .2,\y);
    }
    \node [label=right:{$H$}] (H) at (\x + .1, .1){};
    \draw (\x,0) -- (\x,1.4) -- (\x + .2, 1.4) -- (\x + .2, 0) -- cycle;
  }
  \node [label=below:{$A$}] (A) at (.1,0){};
  \node [label=below:{$B$}] (B) at (1.1,0){};
  \node [label=below:{$A+B$}] (C) at (2.1,0){};
 \end{tikzpicture}
}

\begin{figure}
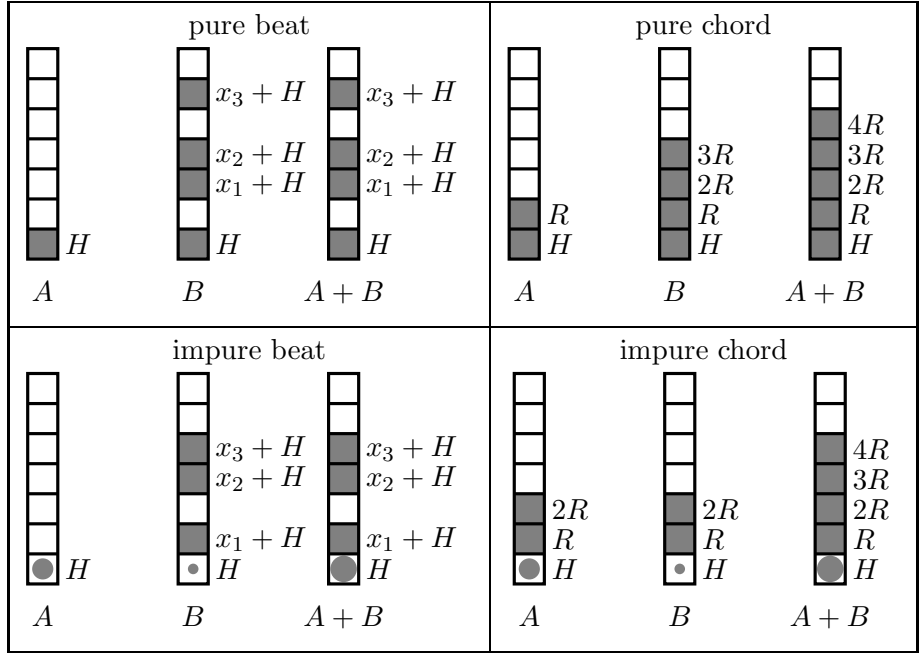

 \begin{tabular}{|c|c|}
  \hline
  pure beat & pure chord \\
  \triomaker{0/{}, 1/{2/x_1+H, 3/x_2+H,5/x_3+H}, 2/{2/x_1+H, 3/x_2+H,5/x_3+H}}{\pureH} & 
          \triomaker{0/{1/R}, 1/{1/R,2/2R,3/3R}, 2/{1/R,2/2R,3/3R,4/4R}}{\pureH} \\
  \hline
  impure beat & impure chord \\
  \triomaker{0/{}, 1/{1/x_1+H, 3/x_2+H,4/x_3+H}, 2/{1/x_1+H, 3/x_2+H,4/x_3+H}}{\impureH} &
          \triomaker{0/{1/R,2/2R}, 1/{1/R,2/2R}, 2/{1/R,2/2R,3/3R,4/4R}}{\impureH}\\
  \hline
 \end{tabular}

 \caption{Structure Atlas}\label{fig:atlas}
\end{figure}

Note that if $(A,B,C)$ is a trio, then any permutation of these three sets yields a new trio.  In addition, for every $g \in G$ we have that $(A+g, B-g, C)$ is a trio.  It follows immediately that these operations preserve nontriviality, maximality, criticality, and deficiency, and we say that two trios are \emph{similar} if one can be turned into the other by a sequence of these operations.  

Next we will introduce some terminology to describe the types of  behaviour present in the structure of nontrivial maximal critical trios.  We begin with a structure which generalizes those critical pairs $(A,B)$ with $\min\{ |A|, |B| \} = 1$ by allowing for subgroups.

\begin{definition} Let $H< G$ be finite.  A trio $\Upsilon$ is a \emph{pure beat relative to} $H$ if $\Upsilon$ is similar to a trio $(A,B,C)$ which satisfies the following: 
\begin{enumerate}
\item $A =H$,
\item   $\stab{B} = H$, and
\item $C = \overline{-(A+B)} \neq \emptyset$.
\end{enumerate}
\end{definition}

Before introducing our next structure, we require a bit more terminology.  Let $H < G$ be a finite subgroup, let $R \in G/H$ and assume that $G/H$ is a cyclic group generated by $R$.  Then we define any set of the form $S = \{ A + iR \mid 0 \le i \le k \}$ with $A \in G/H$ to be an $R$-\emph{sequence}.  We call $A$ the \emph{head} of this sequence, $A+kR$ the \emph{tail} of the sequence, and we say that $S$ is \emph{basic} if it has head $H$.  We define $k+1$ to be the \emph{length} of the sequence, and we call it \emph{nontrivial} if it has length at least $2$.  It is easy to see that a pair of $R$-sequences will be critical, and this is the form in which we will encounter arithmetic progressions.

\begin{definition} Let $H < G$ be finite with $G/H$ is cyclic.  A trio $\Upsilon$ is a \emph{pure chord relative to} $H$, if there exists $R \in G/H$ which generates $G/H$ and a trio $(A,B,C)$ similar to $\Upsilon$ for which the following hold.
\begin{enumerate}
\item $A,B$ are nontrivial $R$-sequences. \label{itm:nontriv}
\item $C = \overline{-(A+B)}$ is not contained in a single $H$-coset.  \label{itm:third}
\end{enumerate}
\end{definition}

It follows immediately from our definitions that every pure beat or pure chord relative to $H$ is a maximal critical trio with deficiency $|H|$.

For each of these two basic structures, there is a variant which allows for recursive constructions of maximal critical trios.  Before introducing these variants, we require another bit of terminology.  For every set $A \subseteq G$ there is a unique minimal subgroup $H$ for which $A$ is contained in an $H$-coset.  We denote this $H$-coset by $[A]$ and call it the \emph{closure} of $A$.

\begin{definition} A trio $\Upsilon$ is an \emph{impure beat}  \emph{relative to} $H < G$, if 
there is a trio $(A,B,C)$ similar to $\Upsilon$ for which

\begin{enumerate}
\item $[A] = H$,
\item $B \setminus H$ is $H$-stable,
\item $C \setminus H = \overline{-(A+B)}\setminus H$, and
\item $B \cap H \neq \emptyset$ and $C \cap H \neq \emptyset$.
\end{enumerate}
In this case $(A, B \cap H, C \cap H)$ is a trio in $H$ which we call a \emph{continuation} of $\Upsilon$.
\end{definition}

\begin{definition}
 Let $H < G$ be finite and assume $G/H$ is cyclic.  A trio $\Upsilon$ is an \emph{impure chord relative to} $H$, if there exists $R \in G/H$ which generates $G/H$ and a trio $(A,B,C)$ similar to $\Upsilon$ satisfying
 \begin{enumerate}
  \item $H \cup A$ and $H \cup B$ are nontrivial basic $R$-sequences,
  \item $C \setminus H = \overline{- (A+B)} \setminus H \neq \emptyset$, and
  \item $A \cap H$, $B \cap H$, and $C \cap H$ are all nonempty.
 \end{enumerate}
 As above, $(A \cap H, B \cap H, C \cap H)$  is a trio in $H$ which we call a \emph{continuation} of $\Upsilon$.
\end{definition}

Note that if $(A,B,C)$ is an impure beat or impure chord relative to $H$ and $(A',B',C')$ is a continuation, then our definitions imply that $(A',B',C')$ is a nontrivial trio in $H$.  Furthermore, it follows from these constructions that $(A',B',C')$ will be maximal whenever $(A,B,C)$ is maximal, and $\delta(A',B',C') = \delta(A,B,C)$.  

With this, we can finally state Kemperman's structure theorem.

\begin{theorem}[Kemperman]
\label{thm:kemperman}
Let $\Upsilon_1$ be a maximal nontrivial critical trio in $G_1$.  Then there exists a sequence of trios $\Upsilon_1, \Upsilon_2, \cdots, \Upsilon_m$ in respective subgroups $G_1 > G_2 > \cdots > G_m$ satisfying
\begin{enumerate}
\item $\Upsilon_i$ is an impure beat or an impure chord with continuation $\Upsilon_{i+1}$ for $1 \le i \le m-1$, and
\item $\Upsilon_m$ is either a pure beat or a pure chord.
\end{enumerate}
\end{theorem}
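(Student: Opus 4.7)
The plan is to derive the theorem from the following structural claim: every maximal nontrivial critical trio in an abelian group $G$ is, up to similarity, itself a pure beat, a pure chord, an impure beat, or an impure chord. Granting this, the required sequence is built iteratively. If $\Upsilon_1$ is already a pure beat or chord we set $m = 1$; otherwise $\Upsilon_1$ is impure relative to some $G_2 < G_1$, and by the remarks in Section \ref{sec:critrios} its continuation $\Upsilon_2$ is again a maximal nontrivial critical trio, now in $G_2$. Since the ambient subgroup strictly shrinks at each impure step, iterating the structural claim must terminate at a pure beat or chord, producing the descending chain $G_1 > G_2 > \cdots > G_m$.

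To prove the structural claim, first apply Theorem \ref{kneser2} to obtain the common stabilizer $K := G_A = G_B = G_C$ with $\delta(A,B,C) = |K|$. Projecting via $\varphi_{G/K}$ yields a maximal nontrivial critical trio $(\tilde A, \tilde B, \tilde C)$ in $\tilde G := G/K$ which has trivial common stabilizer and $\delta = 1$. If some $|\tilde X| = 1$, the similarity relation lets us assume $\tilde A = \{0\}$, whence $A = K$; the remaining conditions for a pure beat relative to $K$, namely $\stab{B} = K$ and $C = \overline{-(A+B)} \neq \emptyset$, follow immediately from $G_B = K$, maximality, and nontriviality. Hence we may assume $|\tilde A|, |\tilde B|, |\tilde C| \geq 2$, reducing the problem to classifying maximal nontrivial critical trios in a finite abelian group with trivial common stabilizer, deficiency exactly $1$, and every set of size at least $2$.

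This classification is the main obstacle. The intended outcome is that either (i) $\tilde G$ is cyclic with some generator $R$ and $\tilde A, \tilde B$ are nontrivial $R$-sequences, giving a pure chord relative to $K$, or (ii) there is a proper nontrivial subgroup $\tilde L \leq \tilde G$ relative to which $(\tilde A, \tilde B, \tilde C)$ is an impure beat or impure chord. In case (ii) the preimage $\varphi_{G/K}^{-1}(\tilde L)$ is a proper subgroup of $G$, and one checks (using that $A, B, C$ are $K$-stable) that $(A, B, C)$ is an impure beat or chord in $G$ relative to it, with the corresponding continuation lying inside that preimage. The planned technique for the dichotomy is to study auxiliary sumsets such as $(\tilde A \cup (\tilde A + r)) + \tilde B$ for carefully chosen $r \in \tilde G \setminus \{0\}$ and apply Theorem \ref{thm:Kneser}: either the stabilizer of some such expanded sumset is a nontrivial $\tilde L$, supplying the subgroup of case (ii), or no such enlargement produces extra stabilization, forcing enough compatibility under translation to identify a cyclic generator of $\tilde G$ and exhibit $\tilde A, \tilde B$ as intervals along it, yielding case (i).
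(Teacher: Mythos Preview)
Your reduction steps (iterating continuations, quotienting by the common stabilizer $K$, disposing of the size-$1$ case as a pure beat) are sound and mirror the opening moves of the paper's argument. However, the heart of the matter is your ``dichotomy'' for trios with trivial stabilizer, deficiency $1$, and all sets of size at least $2$, and here your proposal is not a proof but a hope. You assert that studying $(\tilde A \cup (\tilde A + r)) + \tilde B$ via Kneser will either produce a subgroup $\tilde L$ relative to which the trio is an impure beat or chord, or else force arithmetic-progression structure; but you supply no mechanism for either implication. A nontrivial stabilizer of some auxiliary sumset does not by itself put the \emph{original} trio into impure beat or chord form: in the paper this requires the stability lemmas (Lemmas~\ref{lem:beat} and~\ref{lem:chord}), the purification device of Lemma~\ref{purification}, and the near-sequence transfer of Lemma~\ref{lem:near} to carry the subgroup structure back onto $A$, $B$, $C$. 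Conversely, the absence of such stabilizers does not obviously force the sets to be intervals along a common cyclic generator; the paper extracts this only through a minimal-counterexample argument culminating in a Dyson-type transform $B \mapsto B \cap (g+B)$, $C \mapsto C \cup (-g+C)$ together with a Sidon-set estimate, and even then the contradiction is obtained by invoking the theorem on a strictly smaller instance rather than by any direct identification of a generator.

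A smaller but genuine error: you claim the reduction lands in a \emph{finite} group $\tilde G$, but nothing in your argument forces this. The quotient $G/K$ may be infinite even when $K$ is finite; one of $\tilde A, \tilde B, \tilde C$ may then be merely cofinite. The paper tracks the infinite case explicitly (see for instance the treatment of infinite $H$ in Claim~2 of the proof), and where finiteness is eventually needed it is deduced from the structural hypotheses rather than assumed.
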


\section{Incomplete Closure}

In this section we focus our attention on critical pairs and trios which contain a set $A$ for which $[A] \neq G$.  In particular, we shall prove a stability lemma which shows that every maximal critical trio containing such a set must be a pure or impure beat.  We begin with a lemma which was proved for general groups by Olson \cite{olson}, but which follows from Kneser's Theorem for abelian groups (as observed by Lev \cite{lev-kemp}).

\begin{lemma}
\label{cor:kneser}
Let $A,B$ be nonempty finite subsets of $G$ and assume that $A+B \neq G$ and $[A] = G$.  Then
$|A+B| \ge \tfrac{1}{2}|A| + |B|$.
\end{lemma}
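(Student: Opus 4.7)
The plan is to apply Kneser's Theorem directly to $A+B$ and exploit the condition $[A] = G$ to force the quotient $\tilde A$ to have size at least $2$, which is exactly what turns the standard Kneser bound into the desired $\tfrac{1}{2}|A| + |B|$.

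First I would set $K = \stab{A+B}$ and dispose of the extreme case $K = G$: since $A+B$ is then $G$-stable and nonempty, we would have $A+B = G$, contradicting the hypothesis. So $K$ is a proper subgroup of $G$. The key consequence of $[A] = G$ is then that $A$ cannot be contained in a single $K$-coset, so if we write $a = |A+K|/|K|$ and $b = |B+K|/|K|$ for the numbers of $K$-cosets met by $A$ and by $B$, we have $a \ge 2$.

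Now Kneser's Theorem (Theorem~\ref{thm:Kneser}) yields
\[
|A+B| \;\ge\; |A+K| + |B+K| - |K| \;=\; (a+b-1)|K|.
\]
The one-line computation to close the proof is to split this as
\[
(a+b-1)|K| \;=\; \tfrac{1}{2}\, a|K| + \tfrac{1}{2}(a-2)|K| + b|K|,
\]
and then use the three inequalities $a|K| = |A+K| \ge |A|$, $(a-2)|K| \ge 0$ (this is where $a\ge 2$ is used), and $b|K| = |B+K| \ge |B|$, giving $|A+B| \ge \tfrac{1}{2}|A| + |B|$.

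There is essentially no obstacle beyond keeping the bookkeeping straight: the only subtle point is recognizing that $[A]=G$ combined with $K<G$ forces $a \ge 2$ (otherwise $A$ would sit inside a single $K$-coset, making $[A] \subseteq K \subsetneq G$), and everything else is manipulation of Kneser's lower bound.
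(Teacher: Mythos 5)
Your proof is correct and follows essentially the same route as the paper: both apply Kneser's Theorem to $H=\stab{A+B}$, observe that $A+B\neq G$ forces $H<G$ and hence $[A]=G$ forces $A$ to meet at least two $H$-cosets, and then convert $|A+H|-|H|\ge\tfrac12|A+H|\ge\tfrac12|A|$ into the stated bound. Your coset-counting bookkeeping with $a\ge 2$ is just a notational variant of the paper's one-line computation.
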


\begin{proof}
By Theorem~\ref{thm:Kneser}, $H=\stab{A+B}$ satisfies $|A+B| \geq |A+H|+|B+H|-|H|$ and $H \neq G$ since $A+B \neq G$.  Since $A$ is not contained in any $H$-coset, $|A+H| - |H| \ge \frac{1}{2}|A+H| \ge \frac{1}{2}|A|$.  
Combining these two inequalities yields the desired bound.
\end{proof}

For a set $A \subseteq G$ and a subgroup $H \le G$, we say that $A$ is $H$-\emph{quasistable} if there exists $R \in G/H$ so that $A \setminus R$ is $H$-stable.  Members of a pure beat or chord relative to $H<G$ are $H$-stable.  The impure versions comprise $H$-quasistable sets, and their continuations are the partial $H$ cosets.

\begin{lemma}
\label{lem:discon_sum}
Let $(A,B)$ be a critical pair of finite sets and assume $[A] \in G/H$ for some $H < G$.  Then $A+B$ is $H$-quasistable.  Furthermore, if $H$ is finite, then $\delta(H,B) \ge \delta(A,B)$.
\end{lemma}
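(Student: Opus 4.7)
My plan is to translate so that $A \subseteq H$ and then analyze $A+B$ one $H$-coset at a time, applying Lemma~\ref{cor:kneser} inside $H$ on each partially filled coset. The hypothesis $[A] \in G/H$ forces $H$ to be the minimal subgroup whose coset contains $A$, so after translation the closure of $A$ viewed inside $H$ is all of $H$. For each coset $Q \in G/H$ meeting $B$, set $B_Q = B \cap Q$ and note that $(A+B) \cap Q = A + B_Q \subseteq Q$. Call $Q$ \emph{full} if $A + B_Q = Q$ and \emph{partial} otherwise; the claim that $A+B$ is $H$-quasistable is exactly the claim that at most one $Q$ is partial.

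The heart of the argument is this uniqueness. For a partial $Q$, I would pick $b_Q \in B_Q$ and consider the pair $(A, B_Q - b_Q)$ of subsets of $H$. Their sum is a proper subset of $H$ and $A$ has closure $H$ in $H$, so Lemma~\ref{cor:kneser} applied in the ambient group $H$ yields
\[ |A + B_Q| = |A + (B_Q - b_Q)| \ge \tfrac{1}{2}|A| + |B_Q|, \]
so $|A + B_Q| - |B_Q| \ge \tfrac{1}{2}|A|$. Full cosets contribute $|H| - |B_Q| \ge 0$, so summing over all $H$-cosets meeting $B$ gives
\[ |A+B| - |B| \ge \tfrac{1}{2}|A| \cdot \#\{\text{partial cosets}\}. \]
But criticality says $|A+B| - |B| < |A|$, so the number of partial cosets is strictly less than $2$.

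For the second claim (assuming $H$ finite), I would use $|H+B| = |H| \cdot |\varphi_{G/H}(B)|$ together with the structural information just obtained: all but at most one $H$-coset meeting $B$ is full, which lets one write $|H+B| - |A+B| = |H| - |A+B_R|$ for the (possibly unique) partial coset $R$ (or $0$ if none). Since $|A + B_R| \ge |A|$, expanding gives $\delta(H,B) - \delta(A,B) = |A+B_R| - |A| \ge 0$. The main obstacle in this plan is ensuring that Lemma~\ref{cor:kneser} applies in the right ambient group: the $\tfrac{1}{2}|A|$ bound hinges on $A$ having closure equal to the whole group in which we work, and this is precisely what the hypothesis $[A] \in G/H$ delivers once $A$ is translated into $H$.
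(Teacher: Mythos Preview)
Your argument is correct and follows essentially the same route as the paper's proof: translate $A$ into $H$, decompose $B$ by $H$-cosets, apply Lemma~\ref{cor:kneser} inside $H$ to each partially filled coset to get the $\tfrac{1}{2}|A|$ gain, and use criticality to bound the number of partial cosets below $2$; the deficiency comparison then follows from $|A+B_R|\ge |A|$ exactly as you indicate. Your handling is in fact a bit more explicit than the paper's about why Lemma~\ref{cor:kneser} applies (namely, that $[A]\in G/H$ forces the closure of $A$ within $H$ to be all of $H$), which is the only point that needs care.
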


\begin{proof}
 By replacing $A$ by $g+A$ for a suitable $g \in G$, we may assume that $[A] = H$.  Let $R_1, \ldots, R_k \in G/H$ be the $H$-cosets which have nonempty intersection with $B$, and for every $1 \le i \le k$ let $B_i = B \cap R_i$.  Now we have two inequalities,
 \begin{enumerate}
  \item $|A + B_i| \ge |B_i|$ and 
  \item if $A+B_i \neq R_i$, then $|A+B_i| \ge \frac{1}{2}|A| + |B_i|$,
 \end{enumerate}
 the second of which follows from the previous lemma.  Since $A+B$ is the disjoint union $\bigcup_{i=1}^k (A + B_i)$, it follows that there is at most one $1 \le i \le k$ for which $A+B_i \neq R_i$, so $A+B$ is $H$-quasistable.

 For the last part, we may assume that $H$ is finite and that $A+B_i = R_i$ for all $2 \le i \le k$.  Since $|A+B_1| \ge |A|$ we find \[ \delta(A,B) = |A| + |B| - \sum_{i=1}^k |A + B_i| \le |B| - (k-1)|H| = \delta(H,B) \] which completes the proof.
\end{proof}

We are now ready to prove our stability lemma for trios which contain a set with closure not equal to $G$.

\begin{lemma}[Beat Stability]
\label{lem:beat}
If $(A,B,C)$ is a maximal critical trio and $[A] \in G/H$ for some $H <G$, then $(A,B,C)$ is either a pure or impure beat.
\end{lemma}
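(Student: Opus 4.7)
The plan is to first apply a similarity so that $A \subseteq H$, and (by swapping $B$ and $C$ if necessary) so that $B$ is finite. Let $\varphi = \varphi_{G/H}$; then $\varphi(A) = \{0\}$. The pair $(A, B)$ is critical, since maximality of $(A,B,C)$ gives $|A+B| = |\overline{C}|$ and criticality of the trio unpacks to $|A|+|B|>|A+B|$. Lemma~\ref{lem:discon_sum} then yields a coset $R \in G/H$ such that $(A+B) \setminus R$ is $H$-stable. Since $\varphi(A+B) = \varphi(B)$, the cosets meeting $A+B$ are precisely those in $\varphi(B)$, and for every coset $S \in \varphi(B) \setminus \{R\}$ we have $S \subseteq A+B$.

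Next I would exploit maximality to transfer this structure to $C$ and then back to $B$. From $C = \overline{-(A+B)}$ a coset-by-coset check gives: $S \subseteq C$ when $-S \notin \varphi(B)$; $S \cap C = \emptyset$ when $-S \in \varphi(B) \setminus \{R\}$; and $-R$ is the unique coset on which $C$ can be properly partial (i.e. a nonempty proper subset). Feeding this into $B = \overline{-(A+C)}$, and using $A \subseteq H$ so that $A + S = S$ for every full coset $S$, a symmetric computation produces the same dichotomy for $B$: either $B$ is $H$-stable, or $B$ is properly partial exactly on $R$. The ``mixed'' possibility, in which one of $A+B, A+C$ is properly partial and the other is not, is ruled out by comparing the two identities: it would force the coset $R$, which is partial for $A+B$ and hence lies in $\varphi(B)$, to simultaneously lie outside $\varphi(B)$.

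In the first branch of the dichotomy, $A+B$ is $H$-stable, so $H \leq G_{A+B} = G_C$; Theorem~\ref{kneser2} then gives $G_A = G_B = G_C \geq H$, while $G_A \leq H$ (immediate from $A \subseteq H$) forces $A = H$ and $G_B = H$, identifying $(A,B,C)$ as a pure beat relative to $H$. In the second branch, the similarity $(A, B, C) \mapsto (A, B - r, C + r)$ for any $r \in R$ translates the partial cosets of both $B$ and $C$ onto $H$ while preserving $A \subseteq H$; the resulting trio then satisfies $[A]=H$, $B \setminus H$ is $H$-stable, both $B \cap H$ and $C \cap H$ are nonempty (partial cosets are nonempty by definition), and $C \setminus H = \overline{-(A+B)} \setminus H$ by maximality, which is exactly the definition of an impure beat relative to $H$. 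The main obstacle I expect is the bookkeeping in the dichotomy step, ensuring that the two maximality identities are jointly consistent in precisely the two stated configurations and no hybrid.
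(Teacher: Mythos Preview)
Your argument is correct and follows essentially the same route as the paper: reduce by similarity to $[A]=H$ with $B$ finite, observe that $(A,B)$ is critical, invoke Lemma~\ref{lem:discon_sum} to get $A+B$ $H$-quasistable, and then split into the $H$-stable case (pure beat) and the properly partial case (impure beat). The paper's proof simply asserts that each case ``follows from maximality,'' whereas you spell out the coset-by-coset bookkeeping via the two identities $C=\overline{-(A+B)}$ and $B=\overline{-(A+C)}$; the only substantive difference is that in the pure case you invoke Theorem~\ref{kneser2} to obtain $G_A=G_B=G_C$, while the paper argues directly from maximality (e.g.\ $A\subsetneq H$ would make $(H,B,C)$ a strict supertrio).
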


\begin{proof} By possibly moving from $(A,B,C)$ to a similar trio, we may assume that $[A] = H < G$ and that $B$ is finite. Now, $(A,B)$ is a critical pair, so by Lemma~\ref{lem:discon_sum}, $A+B$ is $H$-quasistable.  If $A+B$ is $H$-stable, then $H$ is finite and it follows from maximality that $A=H$ and $H = \stab{B} = \stab{C}$ so $(A,B,C)$ is a pure beat.  In the latter case, we may assume (by possibly passing to a similar trio) that $\emptyset \neq (A+B) \cap H \neq H$ and it then follows from maximality that $(A,B,C)$ is an impure beat.
\end{proof}

\section{Purification}

In this section we will develop a process we call purification which will allow us to make a subtle modification to a critical trio to obtain a new trio with deficiency no smaller than the original.  This will be a key tool in the remainder of the paper.  

We have already defined notions of deficiency for pairs of finite sets and for trios.  It is also convenient to have a notion of deficiency for a single finite set.  If $\emptyset \neq A \subset G$ is finite we define the \emph{deficiency} of $A$ to be 
 \[ \delta(A) = \max_{B \subset G : A+B \neq G} \delta(A,B). \]
Here we only consider finite nonempty sets $B$.  Note that this is indeed well defined since for every $B \subseteq G$ we have $\delta(A,B) = |A| + |B| - |A+B| \le |A|$ so the maximum in the formula will be obtained.  The following theorem of Mann shows that there is always a finite subgroup which achieves this maximum. 

\begin{theorem}[Mann]
\label{thm:mann}
If $A \subset G$ is finite and nonempty, there exists a finite subgroup $H < G$ with $\delta(A,H) = \delta(A)$ and $A+H \neq G$.
\end{theorem}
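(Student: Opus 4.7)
The plan is to extract the witness subgroup directly from Kneser's theorem, applied to a set $B$ that realizes the maximum in the definition of $\delta(A)$.

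First I would verify that the maximum in the definition of $\delta(A)$ is actually attained. Since for any admissible $B$ we have $\delta(A,B) = |A| + |B| - |A+B| \le |A|$, the supremum is a finite integer, and among all finite nonempty $B \subseteq G$ with $A+B \neq G$ (there is at least one, for instance $B = \{0\}$ when $A \neq G$; the case $A = G$ is vacuous since we require $A \subset G$), pick one, call it $B$, with $\delta(A,B) = \delta(A)$.

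Next I would set $H = \stab{A+B}$ and argue that this $H$ has all three required properties. Finiteness of $H$ is immediate: $A+B$ is nonempty and finite and is a disjoint union of $H$-cosets, so $|H|$ divides $|A+B|$ and in particular $H$ is finite. For $A+H \neq G$, pick any $g \notin A+B$ and any $b \in B$; since $A+B$ is $H$-stable, $(A+K)+b \subseteq A+B$ for $K=H$, so $g-b \notin A+H$, giving $A+H \neq G$. Finally, Kneser's theorem applied to $(A,B)$ yields
\[ |A+B| \ge |A+H| + |B+H| - |H|, \]
which rearranges to
\[ \delta(A,B) = |A| + |B| - |A+B| \le |A| + |B| - |A+H| - |B+H| + |H| \le |A| + |H| - |A+H| = \delta(A,H), \]
where the last inequality uses $|B| \le |B+H|$. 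Combined with $\delta(A,H) \le \delta(A)$ from the definition, this forces $\delta(A,H) = \delta(A)$.

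I do not anticipate a genuine obstacle: the argument is essentially a one-line rearrangement of Kneser's inequality. The only subtle points are (a) confirming that the maximum is attained (so that $B$ exists), and (b) the short check that $A+H \neq G$, which uses the $H$-stability of $A+B$ together with any element $b \in B$ to transport a missing element of $A+B$ to a missing element of $A+H$.
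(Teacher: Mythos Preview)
Your proposal is correct and follows essentially the same route as the paper: choose $B$ attaining $\delta(A)$, set $H = G_{A+B}$, and use Kneser's inequality to get $\delta(A,B) \le \delta(A,H)$. Your verification that $A+H \neq G$ (via $(A+H)+b \subseteq A+B$ for any $b \in B$) is in fact a bit more careful than the paper's one-line justification, and you make explicit the finiteness of $H$ and the attainment of the maximum, both of which the paper treats more briefly.
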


\begin{proof}
Choose $\emptyset \neq B \subseteq G$ so that $\delta(A,B) = \delta(A)$ and $A+B \neq G$.  Now set $H = \stab{A+B}$ and apply Kneser's Theorem to obtain
\begin{eqnarray*}
\delta(A,B) 
   &=&    |A| + |B| - |A+B|	\\
   &\le&  |A| + |B| - |A+H| - |B+H| + |H| \\
   &\le&  |A| - |A+H| + |H| \\
   &=&    \delta(A,H).
\end{eqnarray*}
Finally, $A+H \subseteq A+B+H < G$ since $H = \stab{A+B}$.
\end{proof}

Next we establish a lemma which is a key part of purification.

\begin{lemma}
\label{inc_bd}
Let $H < G$ and $A \subset G$ be finite and assume $(A,H)$ is critical.  If $B \subseteq H$, then $\delta(A,B) \le \delta(A,H)$.
\end{lemma}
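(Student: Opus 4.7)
The plan is to apply Kneser's theorem to the pair $(A,B)$ with $M = \stab{A+B}$, and split into three cases based on how $M$ and $H$ interact.  Let $k$ denote the number of $H$-cosets meeting $A$, so that $|A+H| = k|H|$ and the criticality of $(A,H)$ gives $|A| \ge (k-1)|H| + 1$.  The target inequality $\delta(A,B) \le \delta(A,H)$ rearranges to $|A+B| \ge (k-1)|H| + |B|$, which is what I will establish.

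First, if $H \le M$, then $A+B$ is $H$-stable.  For any $b_0 \in B \subseteq H$, the translate $A + b_0 \subseteq A+B$ meets each of the $k$ cosets of $H$ hit by $A$; since $A+B$ is a union of $H$-cosets, all $k$ of these cosets lie entirely inside $A+B$, so $A+B = A+H$.  The required inequality collapses to $|B| \le |H|$, which is trivial.

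Otherwise $H \not\le M$, so $N := H \cap M$ is a proper subgroup of $H$.  When $N$ is nontrivial, I induct on $|H|$ by passing to the quotient $\overline G = G/N$.  Since $N \le H$ and $N \le M$, both $A+H$ and $A+B$ are $N$-stable, so $|\overline{A+H}| = |A+H|/|N|$ and $|\overline{A+B}| = |A+B|/|N|$.  A short check using $|A+N| \ge |A|$ together with $(A,H)$ critical shows that $(\overline A, \overline H)$ is critical in $\overline G$, while $\overline B \subseteq \overline H$ is immediate.  Since $|\overline H| = |H|/|N| < |H|$, the inductive hypothesis gives $\delta(\overline A, \overline B) \le \delta(\overline A, \overline H)$; multiplying by $|N|$ and simplifying yields $|A+H| - |A+B| \le |H| - |B+N|$, and the estimate $|B+N| \ge |B|$ then produces $\delta(A,B) \le \delta(A,H)$.

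The remaining case is $N = \{0\}$, where the induction is unavailable.  Here the key observation is that since $B \subseteq H$ and $H \cap M = 0$, the $M$-cosets $\{b + M : b \in B\}$ are pairwise distinct, so $|B + M| = |B|\,|M|$.  Kneser's inequality combined with $|A+M| \ge |A|$ then gives
\[ |A+B| \;\ge\; |A+M| + |B+M| - |M| \;\ge\; |A| + (|B|-1)|M| \;\ge\; |A| + |B| - 1, \]
and combining with $|A| \ge (k-1)|H| + 1$ produces $|A+B| \ge (k-1)|H| + |B|$.  The main obstacle is isolating the correct three-way case split based on $H \cap M$; once this is in hand, each branch is a short calculation.
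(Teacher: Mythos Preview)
Your argument is correct and takes a genuinely different route from the paper's.  The paper first appeals to Lemma~\ref{lem:discon_sum}: choosing $K \le H$ with $[B] \in G/K$, that lemma (applied with the roles of the two sets swapped) gives $\delta(A,B) \le \delta(A,K)$, so it remains only to show $\delta(A,K) \le \delta(A,H)$ for a subgroup $K < H$.  This is done by a short direct count: setting $S = (A+H)\setminus A$, $S' = \{g \in S : g + K \subseteq S\}$, and $S'' = S \setminus S'$, one checks that $S'' = (A+K)\setminus A$, and since $|S'|$ is a multiple of $|K|$ with $|S'| \le |S| < |H|$, one gets $|S'| \le |H| - |K|$ and hence $\delta(A,H) = |H| - |S| \ge |K| - |S''| = \delta(A,K)$.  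Your approach bypasses Lemma~\ref{lem:discon_sum} entirely, working instead with $M = G_{A+B}$ and Kneser's theorem and splitting according to the size of $H \cap M$; the induction on $|H|$ in the middle case plays the role that the direct counting argument plays in the paper.  The paper's version is slightly slicker in context because it recycles an earlier lemma, while yours is more self-contained and makes the dependence on Kneser explicit.
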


\begin{proof}
 We may assume that $(A,B)$ is critical, as otherwise the result holds immediately.  Choose $K \le G$ so that $[B] \in G/K$ and note that Lemma \ref{lem:discon_sum} implies $\delta(A,B) \le \delta(A,K)$.  Since $K \le H$, to complete the proof, it suffices to show $\delta(A,K) \le \delta(A,H)$ under the assumption $K < H$.
 
 Define $S = (A+H) \setminus A$ and let $S' = \{ g \in S \mid g+K \subseteq S \}$ and $S'' = S \setminus S'$.  Since $(A,H)$ is critical $|S'| < |H|$, and then we must have $|S'| \le |H| - |K|$ (since $|S'|$, $|H|$, and $|K|$ are all multiples of $|K|$).  Thus 
 \[ \delta(A,H) =  |H| - |S| \ge |K| - |S''| = |K| - |(A+K) \setminus A| = \delta(A,K) \]
 which completes the proof.
\end{proof}

\begin{lemma}[Purification]
\label{purification}
 Let $(A,B,C)$ be a critical trio in $G$, let $H \le G$, and assume $A$ and $H$ are finite and $(A,H)$ is critical.  If $R \in G/H$ satisfies $\emptyset \neq R \cap B \neq R$ and $S = \overline{-(A+R)}$, then $ \delta(A,B \cup R, C \cap S) \ge \delta(A,B,C)$.
\end{lemma}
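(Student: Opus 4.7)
The plan is to analyze directly the effect of the purification modification $(A, B, C) \mapsto (A, B^*, C^*)$ where $B^* := B \cup R$ and $C^* := C \cap S$. I would proceed in three stages: first verify that $(A, B^*, C^*)$ is again a trio, then reduce the desired deficiency inequality to a single counting statement, and finally establish that counting statement using Lemma~\ref{inc_bd} together with a translation.

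To verify the trio condition, I would rule out $0 \in A + B^* + C^*$ by splitting on whether the $B^*$-contribution comes from $B$ or from $R$: the former contradicts the trio condition on $(A,B,C)$ since $C^* \subseteq C$, and the latter would give $c \in C^*$ with $c \in -(A+R)$, contradicting $C^* \subseteq S = \overline{-(A+R)}$. For the reduction, since $A$ is unchanged while $|B^*| - |B| = |R \setminus B|$ and $|C| - |C^*| = |C \cap -(A+R)|$, a case check on which of the three sets is ``largest'' in the definition of deficiency will yield in every case
\[
\delta(A, B^*, C^*) - \delta(A, B, C) = |R \setminus B| - |C \cap -(A+R)|.
\]
(When $G$ is infinite, the hypotheses that $A$ is finite and $\delta(A,B,C) > 0$ force exactly one of $B, C$ to be cofinite, and this status is preserved by the modification; when $G$ is finite the deficiency formula is symmetric in the three sets.) Thus it will suffice to prove the counting inequality $|R \setminus B| \ge |C \cap -(A+R)|$.

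For the counting inequality, the trio condition $C \cap -(A+B) = \emptyset$ gives
\[
|C \cap -(A+R)| \le |{-(A+R)} \setminus {-(A+B)}| = |(A+R) \setminus (A+B)|.
\]
Writing $T = R \cap B$, which is nonempty and a proper subset of $R$ by hypothesis, the inclusion $A + T \subseteq A + B$ yields $|(A+R) \setminus (A+B)| \le |A+R| - |A+T|$. I would then fix $r \in T$ and set $T_0 = T - r \subseteq H$; translation invariance gives $|A+R| = |A+H|$ and $|A+T| = |A+T_0|$. Since $(A,H)$ is critical, applying Lemma~\ref{inc_bd} to $T_0 \subseteq H$ yields $\delta(A, T_0) \le \delta(A, H)$, i.e.\ $|A + T_0| \ge |A| + |T_0| - \delta(A,H)$. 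Combining this with the identity $|A+H| = |A| + |H| - \delta(A,H)$ collapses the chain to $|A+R| - |A+T| \le |H| - |T_0| = |R \setminus B|$, as needed.

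The main obstacle will be precisely the application of Lemma~\ref{inc_bd}: one has to reinterpret the ``partial coset'' $T = R \cap B$, which sits inside $R$ rather than inside $H$, as a subset of $H$ via the translation $T_0 = T - r$, so that the lemma's hypothesis (that the second argument be a subset of $H$) is met. Once this translation is in place the criticality of $(A,H)$ feeds cleanly through the bound, every term aligns, and the remaining work is a careful but routine inspection of the deficiency formula across the possible shapes of a critical trio with $A$ finite.
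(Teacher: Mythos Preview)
Your argument is correct. The verification that $(A,B^*,C^*)$ is a trio, the reduction to the inequality $|R\setminus B|\ge |C\cap -(A+R)|$, and the chain of bounds via $T=R\cap B$ and its translate $T_0\subseteq H$ all go through exactly as you describe; in particular the application of Lemma~\ref{inc_bd} after translating into $H$ is legitimate and gives precisely $|A+R|-|A+T|\le |H|-|T_0|=|R\setminus B|$.

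Your route differs from the paper's. The paper observes that not only $(A,B\cup R,C\cap S)$ but also $(A,B\cap R,C\cup S)$ is a trio, and that by inclusion--exclusion one has the exact identity
\[
\delta(A,B\cup R,C\cap S)+\delta(A,B\cap R,C\cup S)=\delta(A,B,C)+\delta(A,R,S).
\]
It then bounds the second summand on the left by $\delta(A,B\cap R)\le \delta(A,R)=\delta(A,R,S)$, the inequality coming from Lemma~\ref{inc_bd} (up to a translation of $R$ into $H$), and subtracts. So the paper packages the counting into a single submodularity identity plus one application of Lemma~\ref{inc_bd}, whereas you unwind the deficiency change directly and bound $|C\cap -(A+R)|$ through the chain $-(A+R)\setminus -(A+B)\subseteq (A+R)\setminus(A+T)$. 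Both arguments are short and rest on the same key lemma; the paper's version is a bit slicker and avoids the case split on which set is largest, while yours is more explicit about where each lost or gained element goes.
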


\begin{proof}
Since $(A,B,C)$ and $(A,R,S)$ are trios, it follows that both $(A, B \cup R, C \cap S)$ and $(A, B \cap R, C \cup S)$ are trios.  Furthermore
\[ \delta(A, B \cup R, C \cap S) + \delta(A, B \cap R, C \cup S) = \delta(A,B,C) + \delta(A,R,S). \]
The previous lemma implies $\delta(A,R,S) = \delta(A,R) \ge \delta(A,B \cap R) \ge \delta(A,B \cap R, C \cup S)$ and together with the above equation, this yields the desired result.
\end{proof}

Note that the above lemma also applies to pairs. More precisely, if $A,B \subseteq G$ and $H < G$ are finite and both $(A,B)$ and $(A,H)$ are critical, then for every $R
\in G/H$ with $B \cap R \neq \emptyset$, we have $\delta(A,B \cup R) \ge \delta(A,B)$.

\section{Near Sequences}

The goal of this section is to establish two important lemmas concerning a type of set called a near sequence.  The first is a stability lemma which will show that whenever $(A,B,C)$ is a maximal critical trio with some additional properties, and $A$ is a near sequence, then $(A,B,C)$ must be a pure or impure chord.  The second will show that whenever $(A^*,B^*,C^*)$ is a pure or impure chord, of which $(A,B,C)$ is a critical subtrio, then every finite set among $(A,B,C)$ must be a near sequence.

We begin by introducing a couple of important definitions.  For this purpose we shall assume that $H < G$ is a finite subgroup and $R \in G/H$ generates the group $G/H$.

\begin{definition}
 We say that $A \subseteq G$ is a \emph{near} $R$-\emph{sequence} if $A+H$ is an $R$-sequence and $|(A+H) \setminus A| < |H|$.
\end{definition}

\begin{definition}
 We say that $A \subseteq G$ is a \emph{fringed} $R$-\emph{sequence} if
 \begin{enumerate}
  \item $A+H$ is an $R$-sequence, and
  \item if $A+H$ has head $S$ and tail $T$, then either $A \setminus S$ or $A \setminus T$ is $H$-stable.
 \end{enumerate}
\end{definition}

If $A$ is an $R$-sequence, near $R$-sequence, or fringed $R$-sequence, we say that $A$ is \emph{proper} if $| \overline{A} | \ge 2|H|$, and we call it \emph{nontrivial} if $|A| > |H|$.  Next we prove a technical lemma where fringed sequences emerge.

\begin{lemma}\label{lem:fringed}
 Let $(A,B)$ be a nontrivial critical pair, and assume that $A$ is a nontrivial near $R$-sequence for $R \in G/H$, and that $B$ is not contained in any $H$-coset.  If there exists an $R$-sequence $B^*$ with $B \subseteq B^*$ and $A+B^* \neq G$, then $A+B$ is a fringed  $R$-sequence.
\end{lemma}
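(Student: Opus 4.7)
My plan is to establish the two conditions of a fringed $R$-sequence separately, using induction on $|H|$: first that $(A+B)+H$ is an $R$-sequence, then that the deficit $((A+B)+H) \setminus (A+B)$ lies entirely in the head or tail $H$-coset.

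For the $R$-sequence structure, the near hypothesis gives $\delta(A,H) = |H| - |(A+H) \setminus A| > 0$, so $(A,H)$ is critical. Iterating the pair version of Lemma~\ref{purification} to fill in each $H$-coset that meets $B$ produces a critical pair $(A, B+H)$ with $\delta(A, B+H) \ge \delta(A,B) > 0$, and $A + (B+H) \subseteq A + B^* \ne G$ since $B^*$ is $H$-stable. Projecting to $G/H$ and writing $\tilde A$, $\tilde B$, $\tilde B^*$ for the respective images, a direct computation yields
\[
\delta(A, B+H) = |H|\,\delta(\tilde A, \tilde B) - |(A+H) \setminus A|,
\]
which forces $\delta(\tilde A, \tilde B) \ge 1$. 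Since $\tilde A$ is an AP with common difference $\rho := \varphi_{G/H}(R)$, $\tilde B$ is a subset of the AP $\tilde B^*$ (which may be shrunk to start and end at elements of $\tilde B$), and $\tilde A + \tilde B \ne G/H$, we linearize $G/H$ by cutting at a point missing from $\tilde A + \tilde B^*$. Writing $\tilde B = \{c_1\rho, \dots, c_n\rho\}$ with $c_1 < \dots < c_n$, one computes $|\tilde A + \tilde B| = |\tilde A| + \sum_{\ell=2}^n \min(c_\ell - c_{\ell-1}, |\tilde A|)$, and the constraint $\delta(\tilde A, \tilde B) \ge 1$ forces $c_\ell - c_{\ell-1} = 1$ for each $\ell$. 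Thus $\tilde B$ and $\tilde A + \tilde B$ are both APs with common difference $\rho$, so $(A+B)+H$ is an $R$-sequence.

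For the fringed structure, let $H' = \stab{A+B}$. Since $\tilde A + \tilde B$ is a proper AP in the cyclic group $G/H$, its stabilizer is trivial, so $\stab{(A+B)+H} = H$ and $H' \le H$. Kneser's theorem for critical pairs gives $|A+B| = |A+H'| + |B+H'| - |H'|$. If $H' = H$, substituting gives $|A+B| = (|\tilde A| + |\tilde B| - 1)|H| = |(A+B)+H|$, so $A+B$ has zero deficit and is trivially fringed. If $H' < H$, then the hypotheses transfer to the quotient $G/H'$ with subgroup $H/H'$: the sets $\tilde A'' = (A+H')/H'$ and $\tilde B'' = (B+H')/H'$ form a critical pair in $G/H'$ (with Kneser deficiency $1$); $\tilde A''$ is a nontrivial near $R''$-sequence since $|A+H| - |A+H'| \le |(A+H) \setminus A| < |H|$ and $|A| > |H|$ gives $|\tilde A''| > |H/H'|$; $\tilde B''$ is not in a single $H/H'$-coset (its image in $G/H$ is the AP $\tilde B$ of length $\ge 2$); and $\tilde B'' \subseteq (B^*+H')/H'$, an $R''$-sequence whose sum with $\tilde A''$ is proper in $G/H'$. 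Since $|H/H'| < |H|$, induction gives $\tilde A'' + \tilde B'' = (A+B)/H'$ is a fringed $R''$-sequence in $G/H'$; pulling back through the correspondence between $H$-cosets in $G$ and $H/H'$-cosets in $G/H'$ shows $A+B$ is a fringed $R$-sequence.

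The principal obstacle is the inductive case $H' < H$, where each hypothesis of the lemma must be carefully checked to transfer under $G \to G/H'$—most delicately the near-sequence property of $\tilde A''$—and the fringed conclusion must be lifted back through this quotient. The linearization in the AP-counting step is elementary but requires picking a missing element to cut the cyclic quotient. The base case $|H|=1$ is covered by Case $H' = H$, which forces all deficits to vanish.
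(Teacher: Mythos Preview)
Your first half—showing that $(A+B)+H$ is an $R$-sequence via purification and the projection to $G/H$—is correct and quite clean. The gap is in the inductive step of the second half. You assert that $|H/H'| < |H|$, but this requires $H' = G_{A+B}$ to be nontrivial, and nothing in the hypotheses forces that. When $H' = \{0\}$ and $|H| > 1$, quotienting by $H'$ returns you to the same group with the same $H$, and the induction does not progress.

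This case genuinely occurs. Take $G = \ZZ/12\ZZ$, $H = \{0,6\}$, $R = \{1,7\}$, $A = \{0,1,2,6,7\}$, $B = \{0,1,6\}$, $B^* = H \cup R$. Then $A$ is a nontrivial near $R$-sequence, $B$ meets two $H$-cosets, $A+B^* = \{0,1,2,3,6,7,8,9\} \neq G$, and $A+B = \{0,1,2,3,6,7,8\}$ with $\delta(A,B)=1$. Here $6 + (A+B) \neq A+B$, so $G_{A+B} = \{0\}$. The conclusion of the lemma does hold (remove the tail coset $\{3,9\}$ and the remainder is $H$-stable), but your argument does not reach it. Kneser gives only $|A+B| = |A|+|B|-1$ in this situation, which says nothing about how the single missing element of $(A+B)+H$ is distributed among the head and tail cosets; that is precisely the content still to be proved.

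By contrast, the paper does not pass to $G/G_{A+B}$ at all. It runs a minimal counterexample on $|B|$: first it shows (via purification, as you do) that $B$ meets every $H$-coset of $B+H$; then it shows that $A+B$ cannot contain all the interior $H$-cosets of $(A+B)+H$; then it reduces to the case where $B$ spans exactly two $H$-cosets; finally it finishes with a direct inequality count exploiting a point $z$ in an interior coset missing from $A+B$. This combinatorial endgame is what replaces your appeal to induction, and it is exactly what is needed to handle the $H' = \{0\}$ case that your scheme cannot reach.
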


\begin{proof} Suppose (for a contradiction) that the lemma fails, and let $A,B$ be a counterexample for which $|B|$ is minimum.  By shifting $A$ (i.e. replacing $A$ by $A+g$ for some $g \in G$) and $B$ we may assume that $A+H = \bigcup_{i=0}^{\ell} iR$ and $B^* = \bigcup_{i=0}^{ m} iR$.  For convenience let us define $A_i = A \cap i R$ and $B_i = B \cap i R$ for every $i \in \ZZ$.  By replacing $B^*$ with a smaller $R$-sequence, we may assume that $B_0 \neq \emptyset$ and $B_{ m} \neq \emptyset$.  We first prove a series of three claims.

\begin{claim}
 $B_i \neq \emptyset$ for $0 \le i \le  m$.
\end{claim}

It follows from repeatedly applying our purification lemma to $H$-cosets $R \in G/H$ for which $\emptyset \neq R \cap B \neq R$ that $(A,B+H)$ is critical and thus $(A+H,B+H)$ is critical.  It follows from this that the sets $\tilde{A},\tilde{B} \subseteq \ZZ$ given by $\tilde{A} = \{0,1,\ldots,{\ell} \}$ and $\tilde{B} = \{ i \in \ZZ \mid iR \cap B \neq \emptyset \}$ satisfy $(\tilde{A},\tilde{B})$ critical.  It follows, e.g. from Lemma 1.3 of Nathanson\cite{nathanson}, that $\tilde{B}$ is the interval $\{ 0, 1, \ldots  m\}$ which implies the claim.

\begin{claim}\label{clm:noguts}
 $A+B$ does not contain $\bigcup_{i=1}^{ \ell + m -1} iR$.
\end{claim}

Suppose for a contradiction that this claim fails.  Let $K_0 = \stab{A_0 + B_0}$ and $K_1 = \stab{A_{{\ell}} + B_{ m}}$.  We have $K_0, K_1 < H$ since $A+B$ is not a fringed sequence.  Now applying Kneser's Theorem to the sumsets $A_0 + B_0$ and $A_{{\ell}} + B_{ m}$ we find
\begin{eqnarray*}
|A+B| 
    &=&   ({\ell} + m - 1)|H| + |A_0 +B_0| + |A_{{\ell}} + B_{ m}|		\\
    &\ge& ({\ell} + m-1)|H| + |A_0| + |A_{\ell}| + |B_0| + |B_{ m}| - |K_0| - |K_1| \\
    &\ge& \pr{({\ell}-1)|H| + |A_0| + |A_{\ell}|} + \pr{( m-1)|H| + |B_0| + |B_{m}|} \\
    &=&   |A| + |B|
\end{eqnarray*}
which gives us the desired contradiction.

\begin{claim}
 $ m = 1$
\end{claim}

As usual, we suppose for contradiction that $ m > 1$.  First consider the set $B' = B \setminus B_{ m}$.  It follows easily from the inequality $|A_{\ell} + B_{ m}| \ge |B_{ m}|$ that $\delta(A,B') \ge \delta(A,B) > 0$ so $(A,B')$ is critical.   By the minimality of our counterexample, it follows that $A+B'$ must contain $\bigcup_{i=1}^{\ell+m-2} iR$.  Similarly, $(A,B\setminus B_0)$ is a critical pair and hence contains $\bigcup_{i=2}^{\ell+m-1} iR$.  Putting these together, we find $\bigcup_{i=1}^{\ell+m-1} iR \subseteq A+B$, contradicting Claim~\ref{clm:noguts}.

With these claims in place, we are ready to complete the proof.  By the purification lemma, $(A, B \cup R)$ and $(A, B \cup H)$ are critical.  This gives us
\begin{eqnarray}
0 < \delta(A,B \cup R) &=& |A| + |B_0| - {\ell} |H| - |A_0 + B_0| \label{eqn:fringed1} \\
0 < \delta(A,B \cup H) &=& |A| + |B_1| - {\ell} |H| - |A_{\ell} + B_1| \label{eqn:fringed2}  
\end{eqnarray}
We also have
 \begin{equation}
  |A| - |A_0 + B_0| - |A_{\ell} + B_1| \le |A| - |A_0| - |A_{\ell}|  \le ({\ell}-1)|H| \label{eqn:fringed3}
 \end{equation}
Now summing equations (\ref{eqn:fringed1}) and (\ref{eqn:fringed2}) and substituting (\ref{eqn:fringed3}) yields
 \begin{equation}
  0 < |A| + |B_0| + |B_1| - ({\ell}+1)|H|. \label{eqn:fringed4}
 \end{equation}
It follows from Claim~\ref{clm:noguts} that we may choose a point $z \in \pr{\bigcup_{i=1}^{\ell} iR } \setminus (A+B)$.  Assuming $z \in iR$, we have $B_0 \cap (z - A_i) = \emptyset$ and $B_1 \cap (z - A_{i-1}) = \emptyset$.  These together with $|A \setminus (A_{i-1} \cup A_i)| \le ({\ell}-1)|H|$ then imply
 \begin{equation}
  |B_0| + |B_1| \le 2 |H| - |A_{i-1}| - |A_i| \le ({\ell}+1)|H| - |A| \label{eqn:fringed5}
 \end{equation}
Inequalities (\ref{eqn:fringed4}) and (\ref{eqn:fringed5}) are contradictory, and this completes the proof.
\end{proof}

\begin{lemma}
Let $(A,B,C)$ be a nontrivial critical trio with $A,B$ finite, and assume that every supertrio $(A,B^*, C^*) \supseteq (A,B,C)$ has $(A,B,C) = (A,B^*,C^*)$.  Let $H < G$, let $R \in G/H$ and assume $A$ is a nontrivial proper near $R$-sequence.  If neither $B$ nor $C$ is contained in an $H$-coset, then $B$ and $\overline{C}$  are fringed $R$-sequences.
\end{lemma}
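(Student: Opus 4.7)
The plan is to apply Lemma~\ref{lem:fringed} twice: once to the critical pair $(A,B)$, which gives that $A+B$ is a fringed $R$-sequence and hence $\overline{C}=-(A+B)$ is one; and, when $C$ is finite, once to the critical pair $(A,C)$, which gives $A+C$ fringed and hence $\overline{A+C}=-B$ fringed, so $B$ is fringed.

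First I would unpack the partial maximality as $B=\overline{-(A+C)}$ and $C=\overline{-(A+B)}$; the trio deficiency then delivers $(A,B)$ as a critical pair of finite sets, and, when $C$ is finite, likewise $(A,C)$.  Since $A$ is a nontrivial near $R$-sequence we have $\delta(A,H)=|H|-|(A+H)\setminus A|>0$, so $(A,H)$ is itself critical.  Applying Lemma~\ref{purification} repeatedly to the $H$-cosets meeting $B$ in a proper nonempty subset then shows $(A,B+H)$ is critical, hence so is $(A+H,B+H)$, and descending to the quotient yields that $(\tilde{A},\tilde{B})$ is a critical pair in $G/H$.  Because $\tilde A$ is an interval in the cyclic group $G/H$, the standard lift to $\mathbb{Z}$ (as used in the first claim of the proof of Lemma~\ref{lem:fringed}) forces $\tilde B$ to be an interval, so $B^* := B+H$ is an $R$-sequence containing $B$.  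Provided $A+B^*\neq G$, Lemma~\ref{lem:fringed} applies and yields that $A+B$ is a fringed $R$-sequence; negating then gives that $\overline C=-(A+B)$ is a fringed $R$-sequence as well.

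When $G$ is finite, the same argument applied to $(A,C)$ shows $A+C$ is a fringed $R$-sequence, and since the complement of a fringed $R$-sequence in a finite cyclic quotient is again fringed, $\overline{A+C}=-B$ is fringed and so is $B$.  When $G$ is infinite, $G/H$ is infinite cyclic, the condition $\tilde A+\tilde B\neq G/H$ is automatic and the argument for $\overline C$ goes through; however $C$ is then cofinite and Lemma~\ref{lem:fringed} cannot be invoked on $(A,C)$.  In this case I would instead use the interval structure of $\tilde B$ (established above) together with the relation $B=\overline{-(A+C)}$ and the already-proved fringed structure of $A+B$ to rule out interior partial cosets of $\tilde B$ by a direct contradiction with partial maximality.

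The main obstacle is the case $\tilde A+\tilde B=G/H$ with $G/H$ finite, the only situation in which $B^*=B+H$ fails to satisfy $A+B^*\neq G$.  I would rule this out using the hypothesis that $C$ is not contained in a single $H$-coset: $\tilde A+\tilde B=G/H$ forces $A+B$ to meet every $H$-coset, and combining this with $C=\overline{-(A+B)}$ and the properness of $A$ (so that $\tilde A$ misses an interval of at least two cosets in $G/H$) should confine $\overline{A+B}$, and hence $C$, to a single $H$-coset --- contradicting the hypothesis.  The symmetric argument rules out $\tilde A+\tilde C=G/H$, completing the reduction to the case where Lemma~\ref{lem:fringed} applies on both sides.
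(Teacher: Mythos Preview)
Your plan to invoke Lemma~\ref{lem:fringed} directly is natural, but the argument breaks down precisely at what you call the ``main obstacle,'' and this gap is not repairable along the lines you sketch.

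First, a minor point: your justification that $\tilde B$ is an interval is circular as written, since the lift to $\mathbb Z$ used in Claim~1 of Lemma~\ref{lem:fringed} already presupposes that $\tilde B$ sits inside a short interval.  This step can in fact be rescued when $\tilde A+\tilde B\neq G/H$: choosing $z\notin\tilde A+\tilde B$ gives $\tilde B\subseteq (G/H)\setminus(z-\tilde A)$, an interval of length $n-|\tilde A|$, and then the lift is legitimate.  So this is not the real problem.

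The real problem is the case $\tilde A+\tilde B=G/H$.  Your proposed resolution --- that this ``should confine $\overline{A+B}$, and hence $C$, to a single $H$-coset'' --- does not follow.  The equality $\tilde A+\tilde B=G/H$ only says that $A+B$ meets every $H$-coset; it says nothing about how much of each coset is missed, so $C=\overline{-(A+B)}$ can perfectly well spread over many cosets.  Worse, $\tilde A+\tilde B=G/H$ is \emph{equivalent} to the nonexistence of any $R$-sequence $D\supseteq B$ with $A+D\neq G$ (since such a $D$ exists iff the complement of $\tilde B$ contains an interval of length $|\tilde A|$, iff some translate of $-\tilde A$ misses $\tilde B$).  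So in this case Lemma~\ref{lem:fringed} is simply unavailable for the pair $(A,B)$, and by symmetry the same can happen simultaneously for $(A,C)$.

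This is not a degenerate side case: if you look at the paper's proof, its Claim~1 shows that in the extremal counterexample \emph{both} obstacles $\tilde A+\tilde B=G/H$ and $\tilde A+\tilde C=G/H$ hold, and essentially the entire remainder of the argument (Claim~2 and the purification manoeuvres that follow) is devoted to deriving a contradiction from within that situation.  The key extra ingredient is the choice of a counterexample that first maximizes $\delta(A,B,C)$ and then minimizes the number of partial $H$-cosets in $B$ and $C$; purifying one partial coset then produces a trio on which the lemma already holds, and one plays the resulting structure back against Claim~1.  Your proposal has no analogue of this extremal mechanism, and without it the obstacle case cannot be eliminated.
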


\begin{proof} \setcounter{claim}{0}
 Suppose (for a contradiction) that there is a counterexample to the lemma using the set $A$, and then choose $B$ and $C$ so that $(A,B,C)$ is a counterexample for which
 \begin{enumerate}
  \item $\delta(A,B,C)$ is maximum.
  \item $| \{ S \in G/H \mid \emptyset \neq S \cap B \neq S \} | + | \{ S \in G/H \mid \emptyset \neq S \cap C \neq S \} |$ is minimum. (subject to 1).
 \end{enumerate}
 Observe that if $B$ is a fringed $R$-sequence, we can automatically conclude that $\overline{C} = -(A+B)$ is a fringed $R$-sequence by the maximality of $C$.

 \begin{claim}\label{clm:Gfinite}
  There does not exist an $R$-sequence $D$ with $A+D \neq G$ so that $B \subseteq D$ or $C \subseteq D$.  So, 
  in particular, $G$ must be finite.
 \end{claim}

 If such a set $D$ exists with $B \subseteq D$, then by applying the previous lemma we deduce that $A+B$ is a fringed $R$-sequence.  But then, the maximality of $B$ implies that $B$ is a fringed $R$-sequence.  Similarly, if such a set $D$ exists with $C \subseteq D$, then $G$ is finite and $A+C$ is a fringed $R$-sequence.  But then $B = \third{A+C}$ is also a fringed $R$-sequence.

 \begin{claim}\label{clm:impure}
  There does not exist $S \in G/H$ with $S \subseteq B$ or with $S \subseteq C$.
 \end{claim}
 
 If $S \in G/H$ satisfies $S \subseteq B$ then Claim~\ref{clm:Gfinite} is violated by $D = \third{S+A}$ since $C \subset D$.  A similar argument holds if $S \subseteq C$.

 Since $G$ is finite by Claim~\ref{clm:Gfinite}, we will show that both $B$ and $C$ (hence $\overline{C}$) are fringed $R$-sequences.  Without loss of generality, $|C|\geq |B|$.  In particular, $|C|>|H|$ since $|G \setminus A| \geq 2|H|$ and $(A,B,C)$ is critical.

 Using Claim~\ref{clm:impure}, choose an $H$-coset $S \in G/H$ so that $\emptyset \neq B \cap S \neq S$.  Now setting $B' = B \cup S$ and $C' = C \cap \third{A+S}$,  Lemma~\ref{inc_bd} implies $\delta(A,B',C') \ge \delta(A,B,C)$ and it follows that $|C \setminus C'| \le |B' \setminus B| < |H|$.  Since $|C| > |H|$ we have that $C' \neq \emptyset$, so $(A,B',C')$ is a nontrivial critical trio.  Choose $B'',C''$ maximal so that $(A,B'',C'')$ is a supertrio of $(A,B',C')$.

 If $B'' \neq B'$ or $C'' \neq C'$ then $\delta(A,B'',C'') > \delta(A,B,C)$ so by the first criteria in our choice of counterexmple, the lemma holds for $(A,B'',C'')$.  On the other hand, if $B'' = B'$ and $C'' = C'$ then the quantity in our second optimization criteria improves, so again we find that the lemma holds for $(A,B'',C'')$.  

 If $C''$ is not contained in a single $H$-coset, then $B''$ is a fringed sequence, but then Claim~\ref{clm:Gfinite} is violated by $D = B'' + H$ since $B \subset D$.  So, we may assume that $C'' \subseteq T$ for some $T \in G/H$.  Now let $U \in G/H$ satisfy $U \subseteq \third{A+T}$ and suppose for contradiction that $B \cap U = \emptyset$.  In this case $(A, B' \cup U, C')$ is a trio and Lemma \ref{purification} implies
 \[ \delta(A, B', C') + |H| = \delta(A, B' \cup U, C') \le \delta(A, C') \le \delta(A,H) \le |H| \]
 which is a contradiction.  It follows that every $H$-coset contained in $\third{A+T}$ must have nonempty intersection with $B$. 

 With this knowledge, we now return to our original trio $(A,B,C)$ and modify it to form a new trio by setting $C''' = C \cup T$ and $B''' = B \cap \third{A + T}$.  It follows from our purification lemma that $(A,B''',C''')$ is a trio with $\delta(A,B''',C''') \ge  \delta(A,B,C)$.  Furthermore, since $\third{A+T}$ contains at least two $H$-cosets, the set $B'''$ cannot be contained in a single $H$-coset.  Now, by repeating the argument from above, we may extend $(A, B''', C''')$ to a trio with the second and third sets maximal, and then the lemma will hold for this new trio.  This then implies that $C'''$ is contained in an $R$-sequence which violates Claim~\ref{clm:Gfinite}.  This completes the proof.
\end{proof}

\begin{lemma}[Sequence Stability]
\label{lem:chord}
Let $(A,B,C)$ be a maximal critical trio with $[A] = [B] =[C] = G$.  If $A$ is a proper near sequence, then $(A,B,C)$ is either a pure or an impure chord.
\end{lemma}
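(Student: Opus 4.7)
The plan is to invoke the preceding fringed-sequence lemma to extract structure on $B$ and $\overline{C}$, and then split into cases based on whether $A$ is $H$-stable. Since $A$ is a proper near $R$-sequence, there is a finite $H<G$ with $G/H$ cyclic generated by $R$, and after a similarity we may take $A+H=\bigcup_{i=0}^{\ell}iR$ with $\ell\ge 1$. The hypothesis $[A]=[B]=[C]=G$ ensures no one of $A,B,C$ lies inside a single $H$-coset. Since at most one of $B,C$ can be cofinite, after possibly swapping them we may assume $B$ is finite, and then maximality together with the previous lemma gives that $B$ and $\overline{C}$ are fringed $R$-sequences; after a further similarity, $B+H=\bigcup_{i=0}^{m}iR$ with $m\ge 1$. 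Let $H':=G_A=G_B=G_C$ by Kneser~II.

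Case~1: $H\le H'$. Then all three of $A,B,C$ are $H$-stable. Hence $A,B$ are themselves nontrivial $R$-sequences (nontrivial because $[A]=[B]=G$ forces $\ell,m\ge 1$), and $C$ is $H$-stable yet not contained in a single $H$-coset (as $[C]=G$). This is a pure chord.

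Case~2: $H\not\le H'$. Then $B$ is not $H$-stable; by reversing the orientation of $R$ if necessary we may assume $B$'s unique fringed partial coset is at the head $0R=H$, so $B\setminus H=\bigcup_{i=1}^{m}iR$ is $H$-stable. For each $i\in[1,\ell+m]$ a pair $(p,q)$ with $p+q=i$, $p\in[0,\ell]$, $q\in[1,m]$ exists; since $B\cap qR=qR$ is a full coset, $(A\cap pR)+(B\cap qR)=(p+q)R=iR$ in full. Hence $(A+B)\cap iR=iR$ for every such $i$, while $(A+B)\cap H=(A\cap H)+(B\cap H)$. Thus $\overline{C}=-(A+B)$ satisfies $\overline{C}\setminus H=\bigcup_{i=-(\ell+m)}^{-1}iR$, and its fringed structure forces the partial coset to be at $0R=H$; nonemptyness of $\overline{C}\cap H$ gives $(A\cap H)+(B\cap H)\ne H$, so $A\cap H$ is proper (and nonempty since $A+H\supseteq H$). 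Finally, if $A\cap jR$ were partial for some $j\ge 1$, then $A^{*}=A\cup jR$ satisfies $jR+B\subseteq jR\cup\bigcup_{q=1}^{m}(j+q)R\subseteq\bigcup_{i=1}^{\ell+m}iR\subseteq A+B$, whence $A^{*}+B=A+B$ and $(A^{*},B,C)$ is a strictly larger supertrio, contradicting maximality. Thus $A$'s only partial coset is at $H$, and together with the analogous structure for $B$ and $\overline{C}$ this matches every condition of the impure chord definition.

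The main difficulty is Case~2, where the near hypothesis on $A$ a priori permits several partial cosets; collapsing $A$ to a single partial coset located at $H$ requires the saturation step $A^{*}=A\cup jR$ that directly invokes maximality of the trio.
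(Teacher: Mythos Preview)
Your approach is essentially the paper's: apply the preceding lemma to get $B$ (and $\overline{C}$) fringed, then use maximality to force $A$ fringed as well, which yields a chord.  The paper compresses all of your Case~2 into the single sentence ``by maximality $A$ is also a fringed $R$-sequence''; your saturation step $A^{*}=A\cup jR$ is precisely the content of that sentence, made explicit.

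Two points need tightening.  First, the line ``nonemptiness of $\overline{C}\cap H$ gives $(A\cap H)+(B\cap H)\ne H$'' is inverted.  Nonemptiness of $\overline{C}\cap H$ is automatic (it equals $-\bigl((A\cap H)+(B\cap H)\bigr)$); what you actually need for the impure-chord definition is $C\cap H\ne\emptyset$, i.e.\ $\overline{C}\cap H\ne H$.  This follows because you are in Case~2, so $H\not\le G_A=G_C$ (Kneser~II), hence $\overline{C}$ is not $H$-stable; since $\overline{C}$ is fringed and you have shown the cosets $(-i)R$ for $1\le i\le \ell+m$ lie fully inside it, its unique partial coset must be $H$, and that partial coset is proper.

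Second, your assertion $(A+B)\cap H=(A\cap H)+(B\cap H)$ implicitly assumes no wraparound in $G/H$.  This is fine, but you should say why: your inclusion $\bigcup_{i=1}^{\ell+m}iR\subseteq A+B$ combined with $A+B\ne G$ (nontriviality) forces $[G:H]>\ell+m$, so $p+q\equiv 0$ with $p\in[0,\ell]$, $q\in[0,m]$ implies $p=q=0$.
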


\begin{proof}
 Let $A$ be a proper near $R$-sequence for $R \in G/H$ and assume (without loss) that $B$ is finite.  By the previous lemma we deduce that $B$ is a fringed $R$-sequence.  However, then by maximality $A$ is also a fringed $R$-sequence.  Again using maximality, we conclude that  $(A,B,C)$ is either a pure or impure chord relative to $H$.
\end{proof}

\begin{lemma}
\label{lem:near}
 Let $(A,B,C)$ be a critical trio of which $(A^*, B^*, C^*)$ is a maximal critical supertrio.  If $(A^*, B^*, C^*)$ is a pure or impure chord, and $A$ is finite, then $A$ is a proper near sequence.
\end{lemma}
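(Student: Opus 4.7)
The plan is to read off the structure of $A$ from the explicit chord structure of $A^*$, using deficiency accounting to show that $A$ is sufficiently dense in $A^*$. Since $(A^*,B^*,C^*)$ is a maximal critical trio, Theorem~\ref{kneser2} gives $\delta(A^*,B^*,C^*) = |G_{A^*}|$, and combining the identity $\delta(A^*,B^*,C^*) - \delta(A,B,C) = |A^*\setminus A| + |B^*\setminus B| + |C^*\setminus C|$ with $\delta(A,B,C) > 0$ yields the crucial bound $|A^*\setminus A| < |G_{A^*}|$.

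In the pure chord case, $A^*$ is already a union of full $H$-cosets forming an $R$-sequence, and $|G_{A^*}| = |H|$ (any larger stabilizer would force $A^* = G$). Hence $|A^*\setminus A| < |H|$ prevents $A$ from missing any $H$-coset of $A^*$, so $A+H = A^*$ is an $R$-sequence and $|(A+H)\setminus A| = |A^*\setminus A| < |H|$.

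In the impure chord case, $A^* = (A^*\cap H) \cup R \cup 2R \cup \cdots \cup kR$ with $A^*\cap H$ a nonempty subset of $H$. The key sub-claim is $|A^*\cap H| \geq |G_{A^*}|$: if $A^*\cap H = H$ this is trivial, and otherwise $G_{A^*} \leq H$ because an element $g \notin H$ stabilizing $A^*$ would shift $H$ to some $iR \in \{R,\ldots,kR\}$ and force the partial coset $g + (A^*\cap H)$ to coincide with the full coset $iR$, contradicting $|A^*\cap H| < |H|$. Granted this, $A^*\cap H$ is $G_{A^*}$-stable and nonempty so $|A^*\cap H| \geq |G_{A^*}|$; then $|A^*\setminus A| < |G_{A^*}| \leq |A^*\cap H|$ forces $A\cap H \neq \emptyset$, while $|A^*\setminus A| < |H|$ forces $A$ to meet every full coset $iR$ for $i \in \{1,\ldots,k\}$. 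Hence $A+H = H \cup R \cup \cdots \cup kR$ is an $R$-sequence, and a direct count gives $|(A+H)\setminus A| = (|H|-|A^*\cap H|) + |A^*\setminus A| < |H|$.

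For properness $|\overline{A}| \geq 2|H|$, note $|\overline{A}| \geq |\overline{A^*}|$. When $G$ is infinite this is already infinite; otherwise let $n = |G/H|$. In a pure chord, the conditions that $B^*$ has length $\geq 2$ and $C^*$ spans $\geq 2$ cosets give $n \geq a+b+3$ where $A^*$ consists of $a+1$ cosets, hence $|\overline{A^*}| \geq (b+2)|H| \geq 3|H|$. In an impure chord, $C^*\setminus H \neq \emptyset$ together with nontrivial $B^*$ gives $n \geq k+\ell+2 \geq k+3$, so $|\overline{A^*}| \geq 2|H|$. The main technical obstacle will be the sub-claim $G_{A^*} \leq H$ in the impure case with $A^*\cap H \subsetneq H$, which requires the direct inspection of translates described above.
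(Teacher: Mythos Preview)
Your proof is correct and follows essentially the same route as the paper: bound $|A^*\setminus A|$ by the deficiency $\delta(A^*,B^*,C^*)=|G_{A^*}|$, identify $G_{A^*}$ with $H$ (pure case) or a subgroup of $H$ (impure case), and read off the near-sequence structure.  The paper simply asserts that in the impure case there is $K<H$ with $K=G_{A^*}=G_{B^*}=G_{C^*}$ and that $A^*$ is a \emph{proper} fringed sequence, whereas you supply explicit arguments for both (the translate argument for $G_{A^*}\le H$, and the coset count for properness); these are exactly the details the paper suppresses.
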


\begin{proof}
 If $(A^*, B^*, C^*)$ is a pure chord relative to $H \le G$ then $\delta(A^*, B^*, C^*) = H$ and since $(A,B,C)$ is critical, we must have $|A^* \setminus A| < |H|$.  Since $A^*$ is finite, it is a proper $R$-sequence for some $R \in G/H$ and it follows immediately that $A$ is a near $R$-sequence.  Next suppose that $(A^*, B^*, C^*)$ is an impure chord relative to $H \le G$.  In this case, there exists a subgroup $K < H$ so that $K = \stab{A^*} = \stab{B^*} = \stab{C^*}$.  Now since $(A,B,C)$ is critical, it follows that $|A^* \setminus A| < |K|$.  Since $A^*$ is a proper fringed $R$-sequence for some $R \in G/H$, we again find that $A$ is a proper near $R$-sequence.
\end{proof}

\section{Proof}

In this section we prove Kemperman's Theorem.

\begin{proof}[Proof of Theorem~\ref{thm:kemperman}] \setcounter{claim}{0}
Suppose (for a contradiction) that the theorem fails and let $(A,B,C)$ be a counterexample with $|A| \le |B| \le |C|$ so that 

\begin{enumerate}
\item If there is a finite counterexample, then $|G|$ is minimum.
\item $\overline{C}$ is minimum (subject to 1).
\item the number of terms in $( [A], [B], [C] )$ equal to $G$ is maximum (subject to 1, 2).
\end{enumerate}
We shall establish properties of our trio with a series of claims.

\begin{claim}
 The group $H = \stab{A} = \stab{B} = \stab{C}$ is trivial. 
\end{claim}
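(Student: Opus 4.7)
The plan is to first establish the equality $\stab{A} = \stab{B} = \stab{C}$ using maximality, and then, assuming the common stabilizer $K$ is nontrivial, to pass to the quotient $G/K$ and derive a contradiction either by lifting a Kemperman decomposition back to $G$ or by invoking the minimality conditions on $(A,B,C)$.  For the equality: if $K' \le \stab{A}$, then $(A, B+K', C)$ is a trio (since $K'$-stability of $A$ gives $A+(B+K')+C = A+B+C \not\ni 0$, and $K'$ is finite, making $B+K'$ finite or cofinite in step with $B$), so it is a supertrio of $(A,B,C)$; maximality then forces $B+K' = B$, hence $K' \le \stab{B}$, and a symmetric argument handles $C$.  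Write $K$ for the common stabilizer and suppose, for contradiction, that $K \neq \{0\}$.

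Since $A,B,C$ are $K$-stable, setting $\tilde X = \varphi_{G/K}(X)$ gives $X = \varphi_{G/K}^{-1}(\tilde X)$, so $|\tilde X| = |X|/|K|$ and $|\overline{\tilde X}| = |\overline X|/|K|$.  I would check that $(\tilde A, \tilde B, \tilde C)$ is a nontrivial, maximal, critical trio in $G/K$ with $\delta(\tilde A, \tilde B, \tilde C) = \delta(A,B,C)/|K| > 0$; maximality follows since any proper supertrio in $G/K$ pulls back, via $\varphi_{G/K}^{-1}$, to a proper supertrio of $(A,B,C)$ in $G$.  The argument then splits according to whether $(\tilde A, \tilde B, \tilde C)$ satisfies Theorem~\ref{thm:kemperman}.

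If it does, take a Kemperman sequence $\tilde{\Upsilon}_1 = (\tilde A, \tilde B, \tilde C), \tilde{\Upsilon}_2, \ldots, \tilde{\Upsilon}_m$ in subgroups $G/K = \tilde G_1 > \cdots > \tilde G_m$.  Setting $G_i = \varphi_{G/K}^{-1}(\tilde G_i)$ and letting $\Upsilon_i$ be the $\varphi_{G/K}$-preimage of $\tilde{\Upsilon}_i$ inside $G_i$, we have $G = G_1 > G_2 > \cdots > G_m$ and $G_i/K \cong \tilde G_i$.  I would then verify, routinely but carefully, that every defining condition of a pure or impure beat or chord — the closure $[A] = H'$, the stabilizer condition $\stab{B} = H'$, the $R$-sequence structure, the continuation $(A \cap H', B \cap H', C \cap H')$, and all nonemptiness requirements — transfers between $\tilde G_i$ and $G_i$ via the natural isomorphism $G_i/H' \cong \tilde G_i / \tilde H'$ (where $H' = \varphi_{G/K}^{-1}(\tilde H')$).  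This produces a Kemperman decomposition of $(A,B,C)$ in $G$, contradicting that $(A,B,C)$ is a counterexample.

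If instead $(\tilde A, \tilde B, \tilde C)$ is itself a counterexample, the minimality conditions give the contradiction: when $G$ is finite, $|G/K| < |G|$ contradicts~(1); when $G$ is infinite but $G/K$ is finite, $(\tilde A, \tilde B, \tilde C)$ is a finite counterexample, so~(1) would have compelled our $G$ to be finite; and when $G/K$ is infinite, $|\overline{\tilde C}| = |\overline{C}|/|K| < |\overline{C}|$ contradicts~(2).  Each branch yields a contradiction, forcing $K$ to be trivial.  I expect the main obstacle to be the lifting step of the third paragraph, namely the case-by-case verification that every structural ingredient of Kemperman's theorem is preserved by $\varphi_{G/K}$ and its inverse.
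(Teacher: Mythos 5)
Your proposal is correct and follows essentially the same route as the paper, whose entire proof of this claim is the single sentence that a nontrivial common stabilizer $H$ would yield a smaller counterexample in $G/H$. You have simply made explicit the details the paper treats as routine: that $(\tilde A,\tilde B,\tilde C)$ is a nontrivial maximal critical trio, that a Kemperman decomposition in $G/K$ lifts back through $\varphi_{G/K}^{-1}$, and that otherwise one of the minimality criteria (1) or (2) is violated.
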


Otherwise we obtain a smaller counterexample by passing to the quotient group $G/H$ and the trio $\big( \varphi_{G/H}(A), \varphi_{G/H}(B), \varphi_{G/H}(C) \big)$.  

\begin{claim}\label{clm:beat}
 None of the sets $A$, $B$, or $C$ is contained in a proper coset.  
\end{claim}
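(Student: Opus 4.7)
The plan is to derive a contradiction from the assumption that some set among $A, B, C$ is contained in a proper coset. Using the symmetry of the trio under permutation (which preserves maximality, criticality, nontriviality, and our counterexample-selection criteria), I may assume without loss of generality that $[A] = H$ for a proper subgroup $H < G$.

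The Beat Stability Lemma (Lemma~\ref{lem:beat}) then forces $(A,B,C)$ to be either a pure beat or an impure beat. In the pure case, taking $m=1$ and $\Upsilon_1 = (A,B,C)$ is itself the Kemperman decomposition we seek, immediately contradicting the counterexample hypothesis. In the impure case, $(A,B,C)$ admits a continuation $\Upsilon_2 = (A',B',C')$, which is a maximal nontrivial critical trio in the strictly smaller subgroup $H$. This continuation cannot itself be a counterexample to Kemperman's theorem: by the minimality stipulation~(1) in our choice of $(A,B,C)$, if $G$ is finite then $|H| < |G|$ directly violates the minimality of $|G|$, while if $G$ is infinite then the existence of a finite counterexample would have forced our original $G$ to be chosen finite. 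Hence $\Upsilon_2$ admits its own decomposition $\Upsilon_2, \Upsilon_3, \ldots, \Upsilon_m$, and prepending $\Upsilon_1 = (A,B,C)$ as an impure beat with continuation $\Upsilon_2$ yields the required decomposition for $(A,B,C)$---again contradicting that $(A,B,C)$ is a counterexample.

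The main obstacle is the minimality bookkeeping in the impure-beat case: one must be careful that the continuation really is a ``smaller'' instance (a nontrivial critical trio in a proper, finite subgroup) so that assumption~(1) genuinely rules it out as a counterexample, and that the permutation symmetry legitimately reduces the three cases $[A]\ne G$, $[B]\ne G$, $[C]\ne G$ to a single one. Once these bookkeeping points are in hand, the conclusion is a direct appeal to Beat Stability together with the definitions of pure and impure beats.
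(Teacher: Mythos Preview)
Your argument has a genuine gap in the impure-beat case: you implicitly assume that the subgroup $H$ relative to which $(A,B,C)$ is an impure beat is \emph{finite}. The definition of impure beat does not require this, and the paper explicitly allows $H$ to be infinite. When both $G$ and $H$ are infinite, criterion~(1) is vacuous for both $(A,B,C)$ and the continuation $(A',B',C')$, so it cannot be used to rule out $(A',B',C')$ as a counterexample. Your sentence ``if $G$ is infinite then the existence of a finite counterexample would have forced our original $G$ to be chosen finite'' only applies once you know $H$ is finite, which you have not established.

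The paper handles the infinite-$H$ case by invoking criteria~(2) and~(3) instead. The key observation is that if $H$ is infinite, then since $A$ and $B$ are finite and (after passing to the similar trio in the impure-beat definition) $H$-quasistable, each of $A,B$ must lie in a single $H$-coset; hence $A+B$ lies in a single $H$-coset and $|\overline{C}| = |\overline{C'}|$, so criterion~(2) agrees on the two trios. For criterion~(3), at most one of $[A],[B],[C]$ equals $G$, whereas in the continuation at least two of $[A'],[B'],[C']$ equal $H$ (one because it coincides with the set whose closure is $H$, and $[C']=H$ because $C'$ is cofinite in $H$). Thus $(A',B',C')$ would be a strictly better counterexample, contradicting the extremal choice. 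You need this additional argument, or some substitute for it; criterion~(1) alone is not enough.

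A minor side remark: your claim that permutation of the trio ``preserves \ldots\ our counterexample-selection criteria'' is not literally true, since the ordering $|A|\le|B|\le|C|$ and criterion~(2) single out $C$. This does not actually damage your reduction (the Beat Stability Lemma and the beat definitions are stated up to similarity), but the justification as written is inaccurate.
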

 
 Suppose for contradiction that $A,B$ or $C$ is contained in a proper coset, and apply Lemma~\ref{lem:beat}.  If $(A,B,C)$ is a pure beat, we have an immediate contradiction.  Otherwise, we may assume that $(A,B,C)$ is an impure beat relative to $H$ with continuation $(A',B',C')$.  If $H$ is finite, then $(A',B',C')$ contradicts our choice of $(A,B,C)$ for the first criteria.  Otherwise $H$ is infinite, and we may assume (without loss) that $C'$ is infinite.   
 
 Now, $A$ and $B$ are finite and $H$-quasiperiodic, so both $A$ and $B$ are both contained in a single $H$-coset.  Hence criteria (1) and (2) agree on $(A,B,C)$ and $(A',B',C')$.  Furthermore, only one term in $( [A], [B], [C] )$ is equal to $G$, but by construction, one of $A'$, $B'$ has closure $H$, and $[C'] = H$ (since $C'$ is cofinite).  Therefore $(A',B',C')$ is a counterexample which contradicts our choice.

\begin{claim}\label{clm:chord}
 $A$ is not a proper near sequence.
\end{claim}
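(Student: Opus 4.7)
The plan is to suppose for contradiction that $A$ is a proper near sequence, and then mirror the strategy used to prove Claim~\ref{clm:beat} but with ``chord'' in place of ``beat''. Since Claims~1 and~2 have already established $[A] = [B] = [C] = G$, the hypotheses of Lemma~\ref{lem:chord} (Sequence Stability) are met, and we conclude that $(A,B,C)$ must be either a pure chord or an impure chord.

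If $(A,B,C)$ is a pure chord, it already constitutes a Kemperman decomposition of length $m=1$, immediately contradicting the standing assumption that $(A,B,C)$ is a counterexample to Theorem~\ref{thm:kemperman}. So the real work lies in the impure chord case. Suppose $(A,B,C)$ is an impure chord relative to some subgroup $H$; recall that the definition of impure chord forces $H < G$ to be finite. Let $(A',B',C')$ denote its continuation, which (as noted in Section~\ref{sec:critrios}) is a maximal nontrivial critical trio in $H$.

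I would now split into two subcases according to whether $(A',B',C')$ is itself a counterexample to Kemperman's theorem. If it is not, then $(A',B',C')$ admits a Kemperman decomposition $\Upsilon_2, \ldots, \Upsilon_m$ (in subgroups of $H$), and prepending $\Upsilon_1 = (A,B,C)$, whose continuation is $\Upsilon_2$, yields a Kemperman decomposition for $(A,B,C)$ itself, contradicting the counterexample assumption. If $(A',B',C')$ is a counterexample, then since $H$ is finite, it is a \emph{finite} counterexample. But then criterion~(1) in our choice of $(A,B,C)$ is violated in either scenario: if $(A,B,C)$ is finite, then $|H|<|G|$ gives a smaller finite counterexample, contradicting minimality of $|G|$; if $(A,B,C)$ is infinite, then $(A',B',C')$ witnesses the existence of a finite counterexample, which forces our chosen $(A,B,C)$ to be finite.

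The main thing to watch is that the case split genuinely closes, and it does so cleanly because the impure chord definition already guarantees $H$ finite, so criterion~(1) alone handles the impure branch and one never has to invoke the more delicate criteria~(2) or~(3) in this step (in contrast with Claim~\ref{clm:beat}, where the impure beat case had to handle an infinite stabilizer). No new technical obstacle arises beyond carefully checking that the continuation of an impure chord is the right object to feed into the inductive step, which was recorded in the remarks following the impure chord definition in Section~\ref{sec:critrios}.
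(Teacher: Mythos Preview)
Your proof is correct and follows the same approach as the paper: apply Lemma~\ref{lem:chord} (Sequence Stability), dispose of the pure-chord case immediately, and in the impure-chord case use the continuation to contradict the extremal choice via criterion~(1). The paper compresses the impure case to a single sentence, but your expanded case analysis (including the observation that $H$ is finite by the definition of impure chord, so criterion~(1) alone suffices) is exactly what underlies that sentence.
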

 
Otherwise it follows from Lemma~\ref{lem:chord} that either $(A,B,C)$ is a pure or impure chord.  In the former case we have an immediate contradiction.  In the latter a continuation $(A',B',C')$ contradicts our choice of $(A,B,C)$.

\begin{claim}\label{clm:crithalf}
 If $D \subseteq G$ satisfies $(A,D)$ critical and $|D| > \frac{1}{2}|A|$ then $[D] = G$.
\end{claim}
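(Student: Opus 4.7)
The plan is to argue by contradiction: suppose $[D]\neq G$, so $[D]$ lies in a coset of some proper subgroup $K<G$, and by translating $D$ we may assume $D\subseteq K$. Then $|D|\le|K|$ together with $|D|>\tfrac{1}{2}|A|$ gives $|A|<2|K|$. Applying Lemma~\ref{lem:discon_sum} to the pair $(D,A)$ (with $[D]\in G/K$) yields two key facts: the sumset $A+D$ is $K$-quasistable, and if $K$ is finite then $(A,K)$ is critical.

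First I would dispatch the case that $K$ is infinite. Then $A+D$ is both finite and $K$-quasistable, which forces $A+D$ into a single $K$-coset (since a nonempty $K$-stable set would be infinite). Fixing any $d\in D$, this shoves $A+d$, hence $A$, into a single $K$-coset, contradicting Claim~\ref{clm:beat}. So $K$ must be finite and $(A,K)$ is critical. Setting $\tilde{A}=\varphi_{G/K}(A)$, the criticality of $(A,K)$ reads $|\tilde{A}|\cdot|K|<|A|+|K|$; combined with $|A|<2|K|$ this yields $|\tilde{A}|\le 2$. The case $|\tilde{A}|=1$ again places $A$ in one $K$-coset, so $|\tilde{A}|=2$. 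Since $[A]=G$, the difference of the two $K$-cosets in $\tilde{A}$ generates $G/K$, so $A+K$ is a two-term $R$-sequence for some generator $R$ of $G/K$, and $|(A+K)\setminus A|=2|K|-|A|<|K|$ makes $A$ a near $R$-sequence. Criticality also gives $|A|>|K|$.

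The final step splits on $[G:K]$. When $|G|\ge 4|K|$, in particular whenever $G$ is infinite, we get $|\overline{A}|\ge|G|-|A|>2|K|$, so $A$ is a \emph{proper} near $R$-sequence, contradicting Claim~\ref{clm:chord}. I expect the main obstacle to be the small-index cases, where $A$ is a near sequence but not proper, so Claim~\ref{clm:chord} does not apply and I must extract the contradiction from the trio structure instead. In the case $[G:K]=2$, we have $|A|,|B|>|G|/2$, so $|A|+|B|>|G|$ forces $A+B=G$ and hence $C=\overline{-(A+B)}=\emptyset$, contradicting nontriviality of $(A,B,C)$. In the case $[G:K]=3$, Theorem~\ref{kneser2} applied to the maximal critical trio $(A,B,C)$ (with trivial stabilizer by the first claim) gives $\delta(A,B,C)=1$, so $|A|+|B|+|C|=|G|+1=3|K|+1$; but $|A|,|B|,|C|>|K|$ forces the sum to be at least $3(|K|+1)=3|K|+3$, a contradiction.
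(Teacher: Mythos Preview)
Your proof is correct and follows the same overall arc as the paper's: assume $[D]\neq G$, invoke Lemma~\ref{lem:discon_sum}, conclude that $A$ is a near sequence relative to the relevant subgroup, and appeal to Claim~\ref{clm:chord}. The execution differs in two places. First, the paper rules out the possibility that $A$ meets three or more $H$-cosets by the direct quasistability bound $|A+D|\ge 2|H|+|D|\ge 3|D|\ge |A|+|D|$, and then treats the two-coset case by size; you instead deduce $|\tilde{A}|\le 2$ in one stroke from criticality of $(A,K)$ together with $|A|<2|K|$. Both routes land on the same near-sequence conclusion. Second, and more substantively, you explicitly verify the \emph{properness} hypothesis that Claim~\ref{clm:chord} requires: you note that $[G:K]\ge 4$ (and in particular $G$ infinite) forces $|\overline{A}|>2|K|$, and you dispose of the residual small-index cases $[G:K]=2,3$ by separate arguments using the standing assumption $|A|\le|B|\le|C|$ together with Theorem~\ref{kneser2} and the first claim. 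The paper's text simply asserts that the near-sequence conclusion contradicts Claim~\ref{clm:chord} without addressing properness, so your version in fact closes a small gap that the paper leaves implicit.
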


Suppose (for a contradiction) that $(A,D)$ is critical and $|D| > \frac{1}{2}|A|$ and that $[D] = H + x$ for $H < G$.  If $A$ contains points in at least three $H$-cosets, then $A+D$ is $H$-quasiperiodic by Lemma~\ref{lem:discon_sum} so $|A+D| \ge 2|H| + |D| \ge 3|D| \ge |D| + |A|$ which contradicts the assumption that $(A,D)$ is critical.  It then follows from Claim~\ref{clm:beat} that $A$ must contain points in exactly two $H$-cosets.  Now, if $|A| \le |H|$ then we have $|A+D| \ge |H| + |D| \ge |A| + |D|$ which is contradictory.  Otherwise, $|A| > |H|$ and $A$ contains points in exactly two $H$-cosets, but then $A$ is a near $R$-sequence for some $R \in G/H$ and this contradicts Claim~\ref{clm:chord}.  

\begin{claim}\label{clm:nocoset_crit}
 There does not exist a nontrivial finite subgroup $H < G$ so that $(A,H)$ is critical.
\end{claim}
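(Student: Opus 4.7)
The plan is a proof by contradiction. Suppose there is a nontrivial finite subgroup $H < G$ with $(A,H)$ critical. By Claim~\ref{clm:crithalf} applied to $D = H$ (noting that $[H] = H$ is a proper coset of $G$), we have $|H| \le \tfrac{1}{2}|A|$.

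First I would apply the Purification Lemma (Lemma~\ref{purification}) iteratively. Since $(A,H)$ is critical, for every $H$-coset $R$ with $\emptyset \ne R \cap B \ne R$ we may replace $(A,B,C)$ by $(A, B \cup R, C \cap \overline{-(A+R)})$ without decreasing the deficiency. Exhausting all such cosets yields a critical trio $(A, B^*, C^*)$ in which $B^* = B + H$ is $H$-stable, $C^* = \overline{-(A + B + H)}$ is $H$-stable (because $A + B^*$ is $H$-stable), and $\delta(A, B^*, C^*) \ge \delta(A, B, C) \ge 1$.

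Next I would extend $(A, B^*, C^*)$ to a maximal critical supertrio $(\bar A, \bar B, \bar C)$ by the standard iterative closure $\bar A \leftarrow \overline{-(\bar B + \bar C)}$, etc. Because complements, negations, and sumsets of $H$-stable sets are $H$-stable, each step preserves $H$-stability, so $\bar A, \bar B, \bar C$ are all $H$-stable in the limit; a direct check (using $\bar A + \bar B \supseteq A + B^*$) also gives $\bar C = C^*$. Setting $K = \stab{\bar A}$, Kneser's Theorem (Version~\ref{kneser2}) yields $K = \stab{\bar B} = \stab{\bar C}$, $K \ge H$ (so $K$ is nontrivial and proper), and $\delta(\bar A, \bar B, \bar C) = |K| \ge 2$. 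Projecting to $G/K$ gives a maximal critical trio in a strictly smaller abelian group, which by the minimality of $|G|$ among counterexamples (criterion~(1)) cannot be a counterexample, and so satisfies Kemperman's Theorem. Lifting that structure back, $(\bar A, \bar B, \bar C)$ itself satisfies Kemperman's Theorem.

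Finally I would derive the contradiction using the Sequence and Beat Stability lemmas applied to the critical subtrio $(A, B^*, C^*) \subseteq (\bar A, \bar B, \bar C)$. If $(\bar A, \bar B, \bar C)$ is a pure or impure chord at its outermost level, then Lemma~\ref{lem:near} applied to the critical subtrio $(A, B^*, C^*)$ (with $A$ finite) forces $A$ to be a proper near sequence, contradicting Claim~\ref{clm:chord}. Otherwise $(\bar A, \bar B, \bar C)$ is a pure or impure beat relative to some proper subgroup $L$, and one of $\bar A, \bar B, \bar C$ has closure inside a proper $L$-coset; if this set is $\bar A$ or $\bar B$, then $A \subseteq \bar A$ or $B \subseteq \bar B$ inherits a proper closure, contradicting Claim~\ref{clm:beat}. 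The main obstacle I anticipate is the remaining sub-case in which $\bar C = C^*$ is the set with closure in a proper coset: here the sumset $A + B^* = A + B + H$ misses only a single $L$-coset, and I would combine this strong structural information with Lemma~\ref{lem:discon_sum} applied to $(A, B^*)$ and a further application of Beat Stability (Lemma~\ref{lem:beat}) to a suitable maximal supertrio of $(A, B^*, C^*)$, in order to force one of $A$ or $B$ into a proper coset, again contradicting Claim~\ref{clm:beat}.
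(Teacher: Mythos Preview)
Your proposal has two genuine gaps that the paper's argument avoids.

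First, your descent via criterion (1) does not work when $G$ is infinite. The counterexample choice reads ``If there is a finite counterexample, then $|G|$ is minimum''; when $G$ is infinite this gives you nothing about $G/K$. Moreover, because you purify on the $B$ side (enlarging $B$ to $B+H$ and \emph{shrinking} $C$ to $C^*$), you have $|\overline{C^*}| \ge |\overline{C}|$, so criterion (2) is blocked as well. Relatedly, you never verify that $C^* = \overline{-(A+B+H)}$ is nonempty, so the trio you pass to may be trivial.

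Second, the beat sub-case you flag as ``the main obstacle'' is not actually handled: if $(\bar A,\bar B,\bar C)$ is a beat with $\bar C = C^*$ the small set, nothing you have written forces $A$ or $B$ into a proper coset, and the sketch invoking Lemma~\ref{lem:discon_sum} and Lemma~\ref{lem:beat} is not an argument.

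The paper's proof fixes both issues with one move: it purifies a \emph{single} $H$-coset on the $C$ side, taking $C' = C \cup R$ and $B' = B \cap \overline{-(A+R)}$. Then $|\overline{C'}| < |\overline{C}|$, so criterion (2) applies directly to any maximal supertrio $(A^*,B^*,C^*) \supseteq (A,B',C')$, with no need to pass to a quotient. The beat case is ruled out in advance: from $|A+H| \ge 3|H|$ (else $A$ is a near sequence, contradicting Claim~\ref{clm:chord}) one gets $|B'| > |B| - |H| \ge |A| - |H| \ge \tfrac{1}{2}|A|$, and Claim~\ref{clm:crithalf} then forces $[B'] = G$. Since also $[A] = G$ and $[C'] \supseteq [C] = G$, the supertrio cannot be a beat, so it is a chord, and Lemma~\ref{lem:near} gives the contradiction with Claim~\ref{clm:chord}. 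Your opening observation $|H| \le \tfrac{1}{2}|A|$ is morally the same bound, but you never exploit it the way the paper does.
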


Suppose for contradiction that $(A,H)$ is critical with $\br{0} < H < G$. By Claim 1 we may choose an $H$-coset $R \in G/H$ so that $\emptyset \neq C \cap R \neq R$.  Now setting $C' = C \cup R$ and $B' = B \cap \third{A+R}$ our purification lemma implies $\delta(A,B',C') \ge \delta(A,B,C)$.  It follows from this that $0 \le  |C' \setminus C| - |B \setminus B'| < |H| - |B \setminus B'|$.  If $|A+H| = 2|H|$ then $A$ is a near sequence which contradicts Claim~\ref{clm:chord}.  Therefore, we have $|A| + |H| > |A+H| \ge 3|H|$.  This gives us $|B'| > |B| - |H| \ge |A| - |H| \ge \frac{1}{2}|A|$ so by Claim~\ref{clm:crithalf} we have that $[B'] = G$.

Now we let $(A^*,B^*,C^*)$ be a maximal supertrio of $(A,B',C')$.  Since $(A^*,B^*,C^*)$ is maximal with $\delta(A^*,B^*,C^*) \ge \delta(A,B',C') \ge \delta(A,B,C)$ and $|\overline{C^*}| < |\overline{C}|$, the theorem holds for $(A^*,B^*,C^*)$.  Therefore, $(A^*, B^*,C^*)$ must either be a pure or impure chord (since $[A] = [B] = [C] = G$).  Now Lemma~\ref{lem:near} 
implies that $A$ is a proper near sequence, but this contradicts Claim~\ref{clm:chord}.  

\begin{claim}\label{clm:deficiency1}
 Let $D \subseteq G$ be finite and assume that $(A,D)$ is nontrivial and critical.  Then $\delta(A,D) = 1$ and further, either $|D| = 1$ or $[D] = G$.
\end{claim}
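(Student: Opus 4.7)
The plan is to handle the two conclusions in turn. For the deficiency equality $\delta(A,D)=1$, I would invoke Mann's theorem (Theorem~\ref{thm:mann}) on $A$ to produce a finite subgroup $H < G$ realizing $\delta(A) = \delta(A,H)$ with $A + H \neq G$. Since the trivial subgroup always gives $\delta(A,\{0\}) = 1$ (nontriviality of $(A,D)$ rules out $A = G$ in the finite-group case), we have $\delta(A) \geq 1$. A nontrivial maximizing $H$ would then make $(A,H)$ a critical pair with $H$ a nontrivial finite proper subgroup, contradicting Claim~\ref{clm:nocoset_crit}. Hence $H = \{0\}$, $\delta(A) = 1$, and because $(A,D)$ is nontrivial the bound $\delta(A,D) \leq \delta(A) = 1$ combined with criticality forces $\delta(A,D) = 1$.

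For the closure statement, I would assume $|D| > 1$ and let $K \leq G$ be the unique minimal subgroup for which $D$ sits in a $K$-coset, so that $[D] = K + x$; note $K$ is nontrivial. Suppose for contradiction that $K < G$. I would then apply Lemma~\ref{lem:discon_sum} to the critical pair $(D,A)$ (reversing the roles of the two sets compared with its typical use, since now it is $D$ whose closure lies in $G/K$), splitting on whether $K$ is finite or infinite. If $K$ is finite, the lemma yields $\delta(K,A) \geq \delta(D,A) = 1$, so $(A,K)$ is critical with $K$ a nontrivial finite proper subgroup, directly contradicting Claim~\ref{clm:nocoset_crit}. If $K$ is infinite, the lemma instead gives that $A + D$ is $K$-quasistable: there is some $R \in G/K$ with $(A+D) \setminus R$ being $K$-stable. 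A nonempty $K$-stable set is a union of $K$-cosets and therefore infinite when $K$ is infinite, while $A + D$ is finite, so in fact $A + D \subseteq R$. Translating by any fixed $d_0 \in D$ places $A$ inside a single $K$-coset, forcing the minimal subgroup whose coset contains $A$ to be a subgroup of $K < G$, contradicting $[A] = G$ from Claim~\ref{clm:beat}.

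The main obstacle I anticipate is the infinite-$K$ subcase: Lemma~\ref{lem:discon_sum} provides no direct deficiency bound there, so one has to convert $K$-quasistability of the finite sumset into the stronger statement that the sumset is confined to a single coset (exploiting that a nonempty $K$-stable set is infinite when $K$ is), and then propagate that coset restriction to $A$ itself. Once those two observations are in place, both contradictions (with Claim~\ref{clm:nocoset_crit} in the finite case and with Claim~\ref{clm:beat} in the infinite case) are immediate, and the deficiency part of the claim follows just from Mann's theorem combined with Claim~\ref{clm:nocoset_crit}.
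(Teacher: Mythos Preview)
Your proposal is correct and follows essentially the same approach as the paper: both use Mann's theorem together with Claim~\ref{clm:nocoset_crit} to pin down $\delta(A,D)=1$, and both handle the closure statement by applying Lemma~\ref{lem:discon_sum} with the roles of $A$ and $D$ reversed, using $[A]=G$ (Claim~\ref{clm:beat}) to rule out the infinite-subgroup case and Claim~\ref{clm:nocoset_crit} for the finite case. The only difference is organizational---the paper first deduces $H$-quasistability of $A+D$ and then argues that $[A]=G$ forces $H$ finite (implicitly via the same finiteness-of-$A+D$ observation you spell out), whereas you split on the finiteness of $K$ at the outset.
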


 It follows immediately from Claim~\ref{clm:nocoset_crit} and Mann's theorem that $\delta(A,D) = 1$.  Suppose for contradiction that $[D] = H + x$ for $\br{0} < H < G$.  Then Lemma~\ref{lem:discon_sum} implies that $A+D$ is $H$-quasistable.  Since $[A] = G$, it follows that $H$ is finite.  Again, by Lemma~\ref{lem:discon_sum}, $\delta(A,H) \geq \delta(A,D)$, contradicting Claim~\ref{clm:nocoset_crit}.
 
\begin{claim}
 $B$ is not a Sidon set: $|(g+B) \cap B| > 1$ for some $g \in G \setminus \br{0}$.
\end{claim}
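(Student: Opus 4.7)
The plan is to argue by contradiction: suppose $B$ is Sidon, so $r_B(g) := |(g+B) \cap B| \le 1$ for every $g \in G \setminus \br{0}$, while $r_B(0) = |B|$. Since $[A] = [B] = G$ by Claim~2 and $G$ is nontrivial, $|A|, |B| \ge 2$, and Claim~6 then yields $\delta(A,B) = 1$, i.e., $|A+B| = |A|+|B|-1$. I will compare a Cauchy--Schwarz lower bound and a Sidon-based upper bound for the additive energy
\[
 E(A,B) \;:=\; \sum_{g \in A+B} r_{A,B}(g)^2, \qquad r_{A,B}(g) := |\br{(a,b) \in A \times B : a+b = g}|,
\]
and then invoke Claim~3.

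For the lower bound, Cauchy--Schwarz applied to the $|A+B| = |A|+|B|-1$ values of $r_{A,B}$, which sum to $|A||B|$, gives $E(A,B) \ge |A|^2|B|^2/(|A|+|B|-1)$. For the upper bound, since $a_1+b_1 = a_2+b_2 \iff a_1-a_2 = b_2-b_1$, we have $E(A,B) = \sum_{d \in G} r_A(d)\,r_B(d)$ where $r_A(d) := |(d+A) \cap A|$. Using $r_B(0) = |B|$, $r_B(d) \le 1$ for $d \ne 0$, and $\sum_{d \ne 0} r_A(d) = |A|^2 - |A|$,
\[
 E(A,B) \;\le\; |A||B| + |A|(|A|-1) \;=\; |A|\pr{|A|+|B|-1}.
\]
Combining the two bounds yields $(|A|+|B|-1)^2 \ge |A||B|^2$, so $|B|(\sqrt{|A|}-1) \le |A|-1 = (\sqrt{|A|}-1)(\sqrt{|A|}+1)$, whence $|B| \le \sqrt{|A|}+1$. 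Together with $|A| \le |B|$ this forces $|A| \le 2$, hence $|A|=|B|=2$.

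With $|A| = 2$ and $[A] = G$, translation makes $A = \br{0,a}$ where $\langle a \rangle = G$, so $G$ is cyclic. Taking $H = \br{0}$ and $R = \br{a}$, the set $A$ equals the basic $R$-sequence of length $2$ in $G/H = G$; in particular $A+H = A$ is an $R$-sequence and $|(A+H) \setminus A| = 0 < |H|$, so $A$ is a nontrivial near $R$-sequence. The nontriviality of $(A,B,C)$ rules out $|G| \le 3$ (which forces $A+B = G$ and $C = \emptyset$), so $|G| \ge 4$ and $|\overline{A}| \ge 2 = 2|H|$, meaning $A$ is a \emph{proper} near $R$-sequence. This contradicts Claim~3, completing the proof.

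The one delicate step is the Sidon-based upper bound on $E(A,B)$; once the inequality $(|A|+|B|-1)^2 \ge |A||B|^2$ is in hand, everything else is routine arithmetic followed by a direct appeal to Claim~3 via the observation that any two-element generating subset of a cyclic group of order at least $4$ is automatically a proper near sequence.
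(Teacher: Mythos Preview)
Your proof is correct, but it takes a considerably heavier route than the paper's. Both arguments share the same endgame: once one knows $|A|\le 2$, the hypothesis $[A]=G$ forces $G$ to be cyclic generated by the difference of the two elements of $A$, whence $A$ is a proper near sequence relative to the trivial subgroup, contradicting Claim~3. The difference lies in how the bound $|A|\le 2$ is reached. The paper simply observes that if $|A|\ge 3$ one may pick $a_1,a_2,a_3\in A$ and use the Sidon property $|(a_i+B)\cap(a_j+B)|\le 1$ to get
\[
|A+B|\ \ge\ |(a_1+B)\cup(a_2+B)\cup(a_3+B)|\ \ge\ 3|B|-3\ \ge\ |A|+|B|,
\]
contradicting criticality directly. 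Your argument instead passes through additive energy: a Cauchy--Schwarz lower bound $E(A,B)\ge |A|^2|B|^2/(|A|+|B|-1)$ together with the Sidon upper bound $E(A,B)\le |A|(|A|+|B|-1)$ yields $(|A|+|B|-1)^2\ge |A||B|^2$, and combining with $|A|\le |B|$ forces $|A|\le 2$. This is valid, and the energy identity $E(A,B)=\sum_d r_A(d)r_B(d)$ is used correctly, but it is a detour: the three-translate count gives the same conclusion in one line without invoking Claim~6 or any second-moment machinery. The payoff of your approach is a quantitative inequality that would survive weakening the Sidon hypothesis, but for the claim at hand the paper's argument is strictly simpler.
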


Suppose (for a contradiction) that $B$ is a Sidon set.  We must have $|A| \ge 3$ as otherwise either $[A] \neq G$ or $A$ is a near sequence.  Choose distinct elements $a_1,a_2,a_3 \in A$.  Now we have 
\[ |A+B| \ge | (a_1 + B) \cup (a_2 + B) \cup (a_3 + B) | \ge 3|B| - 3 \ge |A| + |B| \]
which is contradictory.

With this last claim in place, we are now ready to complete the proof.  Since $B$ is not a Sidon set, we may choose $g \in G \setminus \br{0}$ so that $B' = B \cap (g+B)$ satisfies $|B'| \ge 2$.  Set $C' = C \cup (-g + C)$ and $B'' = B \cup (g+B)$ and $C'' = C \cap (-g + C)$.  It now follows from basic principles that $(A,B',C')$ and $(A,B'',C'')$ are trios and
\begin{equation}\label{eqn:defsum}
 \delta(A,B',C') + \delta(A,B'',C'') = 2 \delta(A,B,C).
\end{equation}  
If $C'' = \emptyset$ then $G$ must be finite and we have $|C'| = 2|C|$, so
 \begin{eqnarray*}
  \delta(A,B',C') &=& |A| + |B'| + |C'| - |G|\\
                  &\ge& |A| + 2 + 2|C| - |G|  \\
                  &>& |A| + |B| + |C| - |G| \\
                  &=& \delta(A,B,C)
 \end{eqnarray*}
 which contradicts Claim~\ref{clm:deficiency1}.  Therefore $C'' \neq \emptyset$ and then both $(A,B',C')$ and $(A,B'',C'')$ are nontrivial.  Then, (\ref{eqn:defsum}) and Claim~\ref{clm:deficiency1} imply that $\delta(A,B',C') = \delta(A,B'',C'') = 1$ and that $(A,B',C')$ and $(A,B'',C'')$ are both maximal.  Since $|G \setminus C'| < |G \setminus C|$ the theorem holds for the trio $(A,B',C')$.  Since $|B'| \ge 2$, Claim~\ref{clm:deficiency1} implies that $[B'] = G$.  However, then $(A,B',C')$ must be a pure or impure chord, and then Lemma~\ref{lem:near} implies that $A$ is a near sequence which contradicts Claim~\ref{clm:chord}.  This completes the proof.
\end{proof}

\end{document}